\documentclass[reqno]{amsart}

\usepackage{amsfonts, amssymb,amsthm}
\usepackage{verbatim}       
\usepackage{epsfig}
\usepackage{graphicx}
\usepackage[margin=1.2 in]{geometry}
\usepackage{galois}
\usepackage{dsfont}
\usepackage{amsmath} \allowdisplaybreaks[4]
\usepackage{color}
\usepackage{cite}

\newtheorem{theorem}{Theorem}

\newtheorem{proposition}{Proposition}
\newtheorem{remark}{Remark}
\newtheorem{definition}{Definition}
\newtheorem{corollary}{Corollary}

\newtheorem{example}{Example}
\newtheorem*{acknowledgement}{Acknowledgement}

\def\eps{\varepsilon}

\graphicspath{%
    {converted_graphics/}
    {/}
}
\begin{document}

\title{TOWARD A MATHEMATICAL HOLOGRAPHIC PRINCIPLE}
\thanks{The research of the authors was supported by NSERC grants. The research of Z. Li is also supported
by NNSF of China (No. 11161020 and No. 11361023)}
\subjclass[2000]{37A05, 37H99, 60J05}
\date{\today }
\keywords{multivalued functions; selector; absolutely continuous
invariant measure; mathematical holographic principle}

\author[P. G\'ora]{Pawe\l\ G\'ora }
\address[P. G\'ora]{Department of Mathematics and Statistics, Concordia University,
1455 de Maisonneuve Blvd. West, Montreal, Quebec H3G 1M8, Canada}
\email[P. G\'ora]{pawel.gora@concordia.ca}

\author[Zh. Li]{Zhenyang Li }
\address[Zh. Li]{Department of Mathematics and Statistics, Concordia University,
1455 de Maisonneuve Blvd. West, Montreal, Quebec H3G 1M8, Canada}
\email[Zh. Li]{zhenyangemail@gmail.com}

\author[A. Boyarsky]{Abraham Boyarsky }
\address[A. Boyarsky]{Department of Mathematics and Statistics, Concordia University,
1455 de Maisonneuve Blvd. West, Montreal, Quebec H3G 1M8, Canada}
\email[A. Boyarsky]{abraham.boyarsky@concordia.ca}

\author[H. Proppe]{Harald Proppe}
\address[H. Proppe]{Department of Mathematics and Statistics, Concordia University,
1455 de Maisonneuve Blvd. West, Montreal, Quebec H3G 1M8, Canada}
\email[H. Proppe]{hal.proppe@concordia.ca}

\begin{abstract}
In work started in \cite{GLB13} and
continued in this paper our objective is to study selectors of multivalued functions which have
interesting dynamical properties, such as possessing absolutely continuous
invariant measures. We specify the graph of a multivalued function by means of lower and upper
boundary maps $\tau _{1}$ and $\tau _{2}.$ On these boundary maps we
define a position dependent random map $R_{p}=\{\tau _{1},\tau _{2};p,1-p\},$
which, at each time step, moves the point $x$ to $\tau _{1}(x)$ with
probability $p(x)$ and to $\tau _{2}(x)$ with probability $1-p(x)$. Under
general conditions, for each choice of $p$, $R_{p}$ possesses an
absolutely continuous invariant measure with invariant density $f_{p}.$
Let $\boldsymbol\tau$ be a selector which has invariant
density function $f.$ One of our objectives is to study conditions under
which $p(x)$ exists such that $R_{p}$ has $f$ as its invariant density
function. When this is the case, the long term statistical dynamical
behavior of a selector can be represented by the long
term statistical behavior of a random map on the boundaries of $G.$ We refer
to such a result as a mathematical holographic principle. We present
examples and study the relationship between the invariant densities
attainable by classes of selectors and the random maps based on the
boundaries and show that, under certain conditions, the extreme points of the
invariant densities for selectors are achieved by bang-bang random maps,
that is, random maps for which $p(x)\in \{0,1\}.$
\end{abstract}

\maketitle

\section{Introduction}

A function $\tau :X\rightarrow X$ maps every $x\in X$ to only one point $y=\tau
(x).$ There are applications where this is not the case. For example, in
economics, a consumer's action may not manifest itself in a uniquely
determined process. This is also common in game theory. To study such applications, we need a new
analytical tool, the multivalued function or correspondence as it is called
in the mathematical economics literature \cite{AF90,BL82}. A multivalued function $\Gamma :X\rightrightarrows X$ is a
function from $X$ to the set $2^{X}$ of all subsets of $X$.  The graph of $\Gamma $
is the set: $G=\left\{(x,y)\in X\times X|y\in \Gamma(x)\right\}$. Such maps have
important applications in rigorous numerics \cite{K08}, in economics \cite{A66, H74, D59}, in
dynamical systems \cite{BG97, A80, A00}, chaos synchronization \cite{RALCC01}, and in differential
inclusions \cite{C05}.  Once $\Gamma $ is specified one considers maps $\boldsymbol\tau
:X\rightarrow X$ with $\boldsymbol\tau (x)\in \Gamma (x)$. Such maps are called
 selectors. Establishing the existence of continuous selectors
in topological spaces has been an area of active interest for more than 60
years \cite{M56, S87, AC75, T95, TT96, DP83}. In the setting of chaotic dynamical systems, however,
selectors possessing measure theoretic, rather than topological, properties
are of paramount importance.

In this paper we study selectors of a multivalued function that possess absolutely continuous
invariant measures (acims) and
relate their dynamics to the dynamics of random maps that are based solely
on the boundaries of the graph $G$ defining the multivalued function. We refer
to such a property as holographic. Figure \ref{fig:generic} shows an example of a region $G$ with boundary maps and a selector $\boldsymbol\tau$. Very loosely, the Holographic Principle claims that information in the
interior of a black hole can be described by information on its boundary. In
this note we attempt to establish a basis for an analogous dynamical system result:
let $\boldsymbol\tau$ be a selector with values in $G$, and with probability density function (pdf) $f.$ The main problem we address is: under
what conditions on the boundary maps $\{\tau _{1},\tau _{2}\}$ and on $\boldsymbol\tau$, can we find a probability function $p(x)$ such that the resulting random
map $R_p=\left\{\tau _{1}, \tau _{2}; p, 1-p\right\}$ has $f$ as its invariant pdf? When
this is the case, the long term statistical behavior of the map $\boldsymbol\tau$ inside $G$ is represented by the long term statistical behavior of a
random map defined only on the boundaries of $G$. Such a result qualifies to be referred to as a
mathematical Holographic Principle. Gaining insight into
this dynamical holographic problem is the main objective of this paper.

\begin{figure}[htbp] 
  \centering
  \includegraphics[width=2.55in,height=2.55in,keepaspectratio]{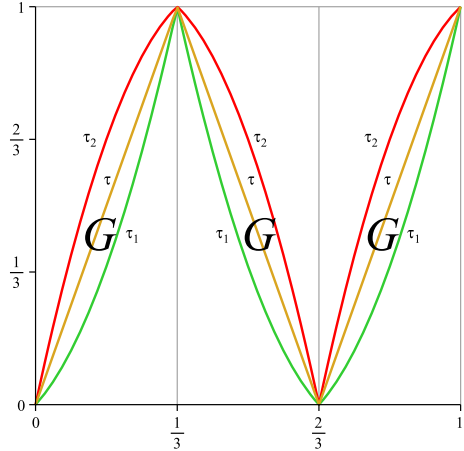}
  \caption{Lower and upper boundary maps $\tau_1$, $\tau_2$ and a selector $\boldsymbol\tau$.}
  \label{fig:generic}
\end{figure}

We shall use random maps \cite{GB03, GB06, GBB03}, where the
probabilistic weights associated with the maps are functions of position as
opposed to being constants in the standard random map framework \cite{RALCC01}. We
define a position dependent random map as follows: let $R_p=\{\tau _{1},\tau
_{2}; p, 1-p\}$, where $\tau _{1}$ and\ $\tau _{2}$ are maps of the unit
interval and $p$ and $1-p$ are position dependent probabilities,
that is, $0\leq p(x)\leq 1$. At each
step, the random map moves the point $x$ to $\tau _{1}(x)$ with
probability $p(x)$ and to $\tau _{2}(x)$ with
probability $1-p(x)$. Of interest to this paper is the following result
\cite{GB03, GB06}: for fixed $\{\tau _{1},\tau _{2}\}$, $R_p$ can have different
invariant pdf's, depending on the choice of the
(weighting) functions $p$. Let $f_{k}$ be an invariant
density of $\tau _{k}$, $k=1,2$. It is shown in \cite{GB03} that for any positive
constants $c_{k}$, $k=1,2$, there exists a system of weighting probability
function $p$ such that the density $f=c_{1}f_{1}+c_{2}f_{2}$ is
invariant under the random map $R_p=\{\tau _{1},\tau _{2}; p, 1-p\},$ where $
p=\frac{c_{1}f_{1}}{c_{1}f_{1}+c_{2}f_{2}}$. (It is assumed that $0/0=0$.)

We now list some of the problems studied in this paper by section:

When can a pdf of a selector be realized as a pdf of a random map based on the boundary maps?
In Section 2 we attempt to gain insight by considering
several examples illustrating that sometimes this is possible but in similar
cases it is not. In Section 3, Theorem \ref{Previous_selector_result} and Corollary \ref{coro_cov} state that any convex combination of pdf's of boundary maps can be realized both as the pdf of a selector and as the pdf of a random map. In Section 4 we use a result from \cite{KCG} to study functional equations whose solutions guarantee that a given selector with pdf $f$ can be achieved by a random map on the boundaries. In Section 5 we present an example where the boundary maps each have a global attracting invariant measure that is singular. Surprisingly, however, we will show that a selector that has Lebesgue invariant measure can be achieved by a position dependent random map on the boundary maps.

In Section 6, we ask: when can a pdf of a random map based on the boundary maps be realized as a pdf of a selector? Theorem \ref{Th_rand_sel} proves that this always holds in the case of piecewise expanding boundary maps
which are of the same monotonicity on partition intervals.

Section 7 establishes a continuity theorem that is used in Section 8, where we give a characterization of the extreme points of the set of pdf's of all random maps based  on the boundary maps. In the last two sections we present a summary of the examples and concluding remarks.

\section{Gaining insight}

In this section we present a number of examples, both positive and negative, as we explore the problem: when a pdf of a selector can be realized as a pdf of a random map based on the boundary maps. Example \ref{quadratic} presents a positive general solution for the case when the selector is the triangle map and the boundary maps have two branches, the first quadratic, the second linear.
\begin{example}\label{quadratic}
In Figure \ref{fig:quadr_tau1_tau2}
a simple region $G$ defined by piecewise monotonic maps consisting of 2
pieces each, where on $[a,1]$ the graphs of both boundary maps are the same
and defined by the slope $\frac{1}{1-a}$ map. The selector we consider
 is the triangle map, whose pdf is $f=1$. We will show that $f$ can be achieved by a position
dependent random map on the boundary maps. In fact, in this example we can
find the exact form of $p(x)$.

\begin{figure}[h]
\begin{minipage}[t]{0.45\linewidth}
\centering
\includegraphics[width=\textwidth]{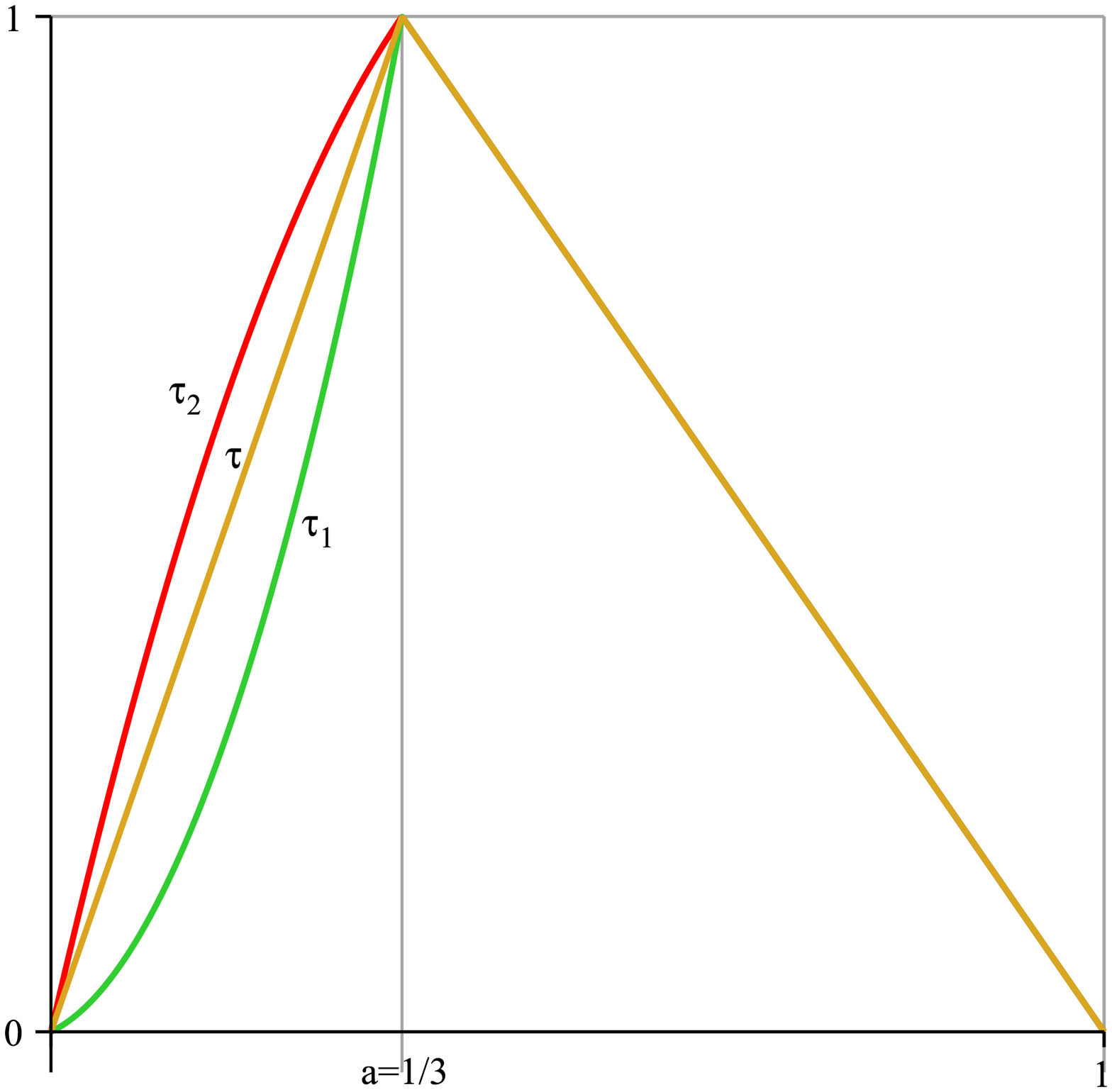}
  \caption{Maps $\tau_1$ and $\tau_2$ from Example  \ref{quadratic}.}
  \label{fig:quadr_tau1_tau2}
\end{minipage}
\hspace{0.5cm}
\begin{minipage}[t]{0.45\textwidth}
\centering
\includegraphics[width=\textwidth]{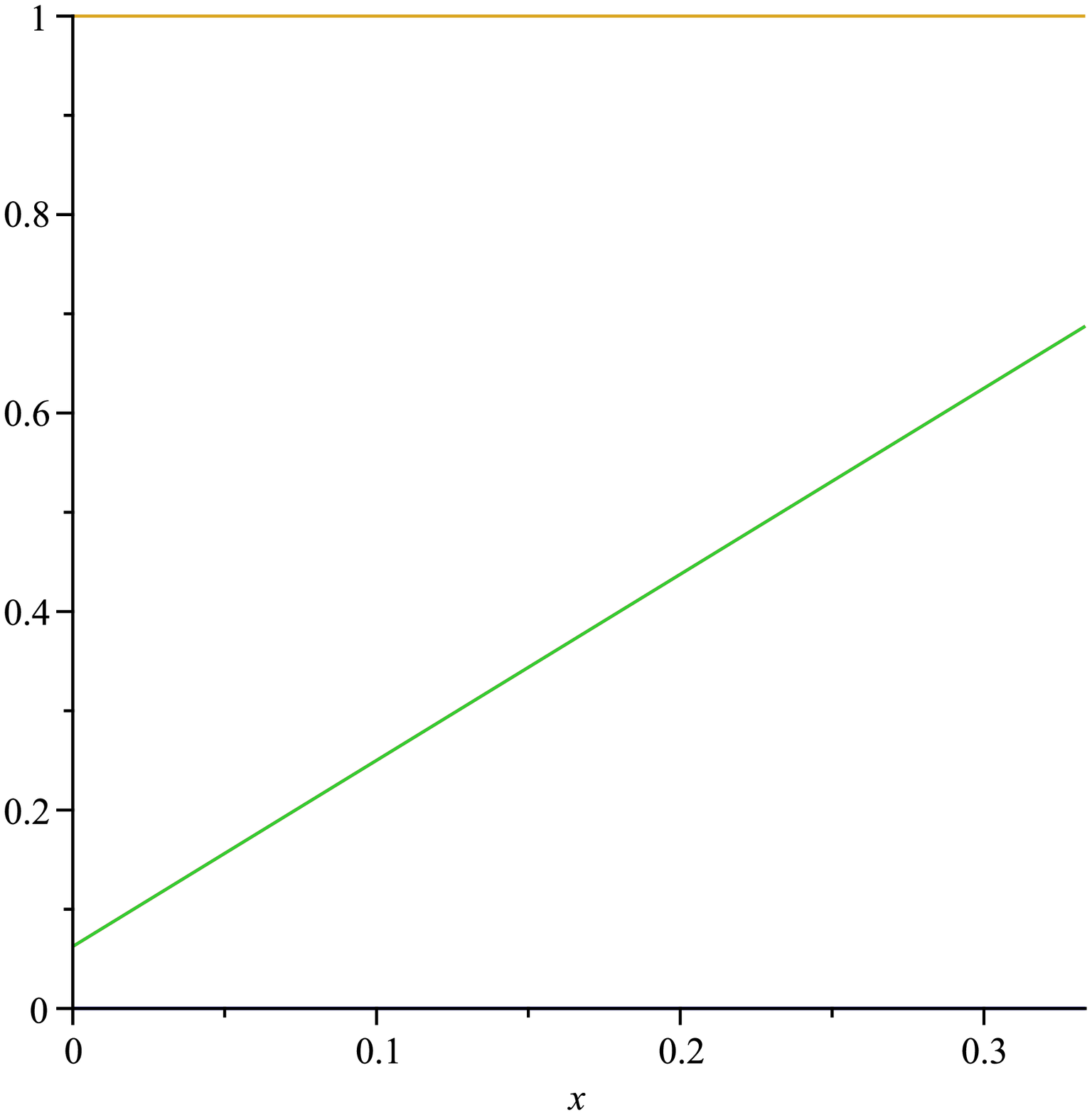}
  \caption{Invariant function $p$ for Example \ref{quadratic}.}
  \label{fig:quadr_p_of_x}
\end{minipage}
\end{figure}


Let us consider $\tau_i:[0,a]\to[0,1]$, $i=1,2$, increasing with $\tau_i(0)=0 $, $\tau_i(a)=1 $ satisfying $\tau_1(x)\le x/a$, $\tau_2(x)\ge x/a$
and define the map $\tau_{21}=\tau_2^{-1}\circ\tau_1:[0,a]\to[0,a]$. $\tau_{21}$ is increasing with $\tau_{21}(0)=0 $, $\tau_{21}(a)=a $, and satisfies $\tau_{21}(x)< x$ on the open interval $(0,a)$.

We consider the special case where $\tau_1$ and $\tau_2$ are quadratic:
$$\tau_1(x)=a_1x^2+b_1x\ \ ,\ \ \tau_2(x)=a_2x^2+b_2x\ ,$$
where $a_i=(1-ab_i)/a^2$, $i=1,2$, $0<b_1<1/a<b_2<2/a$. It follows that
$$\tau_{21}(x)=\frac 1{2(1-ab_2)}\cdot \left(-a^2b_2+\sqrt{a^4b_2^2+4(1-ab_2)[(1-ab_1)x^2+a^2b_1x]}\right)\ .$$

The Frobenius-Perron operator (see \cite{BG97, GB03}) associated with the random map
$R=\{\tau _{1},\tau _{2};p,1-p\}$ is
\begin{equation}\label{quadra_ran}
\left(P_{R}f\right)(x)
=\frac{p(\tau_1^{-1}(x))f(\tau_1^{-1}(x))}{\tau'_1(\tau_1^{-1}(x))}+
\frac{\left(1-p(\tau_2^{-1}(x))\right)f(\tau_2^{-1}(x))}{\tau'_2(\tau_2^{-1}(x))}
+(1-a)f(1-(1-a)x).
\end{equation}
If there exists a probability function $p$ such that $R$ preserves the pdf $f=1$, i.e. $\left(P_{R}1\right)(x)=1\ a.e.$, then
it follows from Equation (\ref{quadra_ran}) that
\begin{equation}\label{main_eq_qua_V1}
p(\tau_1^{-1}(x))=p(\tau_2^{-1}(x))\frac{\tau'_1(\tau_1^{-1}(x))}{\tau'_2(\tau_2^{-1}(x))}-
\frac{\tau'_1(\tau_1^{-1}(x))}{\tau'_2(\tau_2^{-1}(x))}+a\tau'_1(\tau_1^{-1}(x)).
\end{equation}
After introducing the new variable $y=\tau_1^{-1}(x)$ and replacing $y$ with $x$, equation (\ref{main_eq_qua_V1}) becomes the functional equation
\begin{equation}\label{main_eq_qua}
p(x)=p(\tau_{21}(x))\cdot\tau'_{21}(x)-\left[\tau_{21}'(x)-a\cdot\tau_1'(x)\right]\ .
\end{equation}
Now we need to verify that $p$ satisfies $$0\leq p\leq 1\ .$$

The linear function $p(x)=A+Bx$ satisfies (\ref{main_eq_qua}) if we let
$$ A=\frac{b_1(1-ab_2)}{b_1-b_2}\ \  ,\ \ B=\frac{2(1-ab_1)(1-ab_2)}{(b_1-b_2)a^2}\ .$$

 Let $a=1/3$, $b_1=0.5$ and $b_2=4.5$. Then $A=0.0625$ and $B=1.875$. The
maps $\tau_1$, $\tau_2$ and the function $p$ are shown in Figures \ref{fig:quadr_tau1_tau2}
 and \ref{fig:quadr_p_of_x}, respectively. Note that $p(x)$ is defined on $[0,a]$, and is arbitrary on $(a,1]$
  since both maps are the same on $(a,1]$.

 \end{example}

Example \ref{fivexmod1} presents a positive solution for the specific case of a selector and semi-Markov boundary maps with five branches.
\begin{example}\label{fivexmod1}
Consider boundary maps which are piecewise linear
semi-Markov maps as in \cite{GB06}. Consider $\tau_1$ and $\tau_2$ as defined below and shown in Fig.\ref{fivex_mod1}. We can show that the selector defined by $\boldsymbol\tau (x)=5x\ \rm{mod\,}1$, which has
pdf $f=1,$ can be attained by a position dependent random map.
\begin{equation*}
\tau_1(x)=\begin{cases}2x \ ,\ &\text{\ for\ } \ 0\le x\le 0.1\ ;\\
                   8x-0.6 \ ,\ &\text{\ for\ } \ 0.1\le x< 0.2\ ;\\
                   5x\ \rm{mod\,}1 \ ,\ &\text{\ for\ } \ 0.2\le x\leq 1\ .
\end{cases}\ \ \ \ \
\tau_2(x)=\begin{cases} 8x \ ,\ &\text{\ for\ } \ 0\le x\le 0.1\ ;\\
                   2x+0.6 \ ,\ &\text{\ for\ } \ 0.1\le x< 0.2\ ;\\
                   5x\ \rm{mod\,}1 \ ,\ &\text{\ for\ } \ 0.2\le x\leq 1\ .
\end{cases}
\end{equation*}
\begin{figure}[htp]
\centering
\includegraphics[width=2.55in,height=2.55in,keepaspectratio]{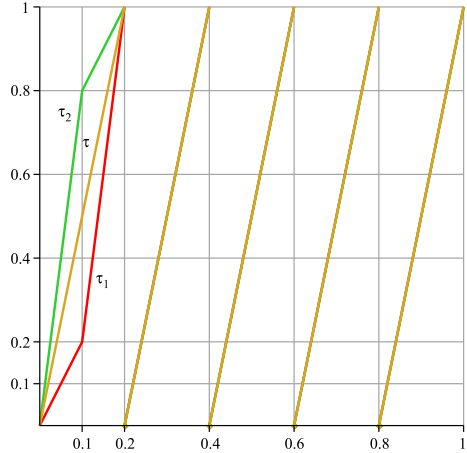}
\caption{A selector whose pdf can be achieved by a random map defined by the boundary maps.}
  \label{fivex_mod1}
\end{figure}

Let us consider the random map $R=\{\tau _{1},\tau _{2};p,1-p\}$, where
\begin{equation*}
p(x)=\begin{cases}0.2 \ ,\ &\text{\ for\ } \ 0\le x< 0.1\ ;\\
                   0.8 \ ,\ &\text{\ for\ } \ 0.1\le x\le 0.2\ ,
\end{cases}
\end{equation*}
and arbitrary on the interval $(0.2,1]$. Let us use the matrix operator \cite{BG97} which corresponds to the Frobenius-Perron operator associated with the random map $R$. Denote the induced matrices of $\tau _{1}$ and $\tau _{2}$
by $M_1$ and $M_2$, respectively. We denote the probability function $p$ by the vector $[p_1, p_2, \ldots, p_6]$, where $p_1=0.2$, $p_2=0.8$, and $p_k$, $k=3, \ldots, 6$, are arbitrary. Let $q$ equal $[1-p_1, 1-p_2, \ldots, 1-p_6]$.
 Now the induced matrix of the map $R$ \cite{GB06} is \[M=\text{diag}(p)M_1+\text{diag}(q)M_2\ .\]
 Thus,
\begin{equation*}
M=\left[ \begin {array}{cccccc}  1/5& 1/5&
 1/10& 1/10& 1/10&0\\ \noalign{\medskip}0&0&
 1/10& 1/10& 1/10& 1/5
\\ \noalign{\medskip}1/5&1/5&1/5&1/5&1/5&1/5\\ \noalign{\medskip}1/5&1
/5&1/5&1/5&1/5&1/5\\ \noalign{\medskip}1/5&1/5&1/5&1/5&1/5&1/5
\\ \noalign{\medskip}1/5&1/5&1/5&1/5&1/5&1/5\end {array} \right].
\end{equation*}
It can be checked that the left invariant eigenvector is $[1,1,1,1,1,1]$, which implies the random map $R$
preserves Lebesgue measure, the acim of the selector $\boldsymbol\tau(x)=5x\ \rm{mod\,}1$.
\end{example}

Example \ref{bendedfivexmod1} shows that with the same boundary maps as in Example \ref{fivexmod1} and another selector (which has an invariant pdf), it is impossible.
\begin{example}\label{bendedfivexmod1}
In this example, $\tau_1$ and $\tau_2$ are the same as in Example \ref{fivexmod1}, but the selector $\boldsymbol\tau$ is defined by:
\begin{equation*}
\boldsymbol\tau(x)=\begin{cases} 6x \ ,\ &\text{\ for\ } \ 0\le x\le 0.1\ ;\\
                   4x+0.2 \ ,\ &\text{\ for\ } \ 0.1\le x< 0.2\ ;\\
                   5x\ \rm{mod\,}1 \ ,\ &\text{\ for\ } \ 0.2\le x\leq 1\ .
\end{cases}
\end{equation*}

\begin{figure}[htp]
\centering
\includegraphics[width=2.55in,height=2.55in,keepaspectratio]{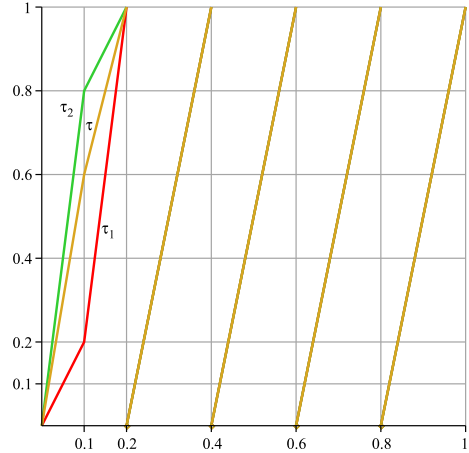}
\caption{A selector whose pdf cannot be achieved by a random map defined by the boundary maps.}
\label{bended_5xmod1}
\end{figure}

The graph is shown in Fig.\ref{bended_5xmod1}.
The invariant density of $\boldsymbol\tau$ is
\begin{equation}\label{den_desired}
f(x)=\frac{30}{31}\chi_{_{[0,0.6]}}(x)+\frac{65}{62}\chi_{_{[0.6,1]}}(x).\end{equation}
Let
\begin{equation*}
p(x)=\begin{cases} p_1(x) \ ,\ &\text{\ for\ } \ 0\le x\le 0.2\ ;\\
                   1 \ ,\ &\text{\ for\ } \ 0.2< x\le 1\ .
\end{cases}
\end{equation*}
Let $q(x)=1-p(x)$, for $x\in[0,1]$.
We define a random map $R=\{\tau_1,\tau_2;\ p(x),q(x)\}$ whose Frobenius-Perron operator \cite{GB03} is given by
\begin{eqnarray*}
\left(P_{R}f\right)(x)
&=&\frac{p_1(\frac x2)f(\frac x2)}{2}\chi_{_{[0,0.2]}}(x)+\frac{p_1(\frac{x}{8}+\frac{3}{40})f(\frac{x}{8}+\frac{3}{40})}{8}\chi_{_{[0.2,1]}}(x)\\
& & +\frac{\left(1-p_1(\frac x8)\right)f(\frac x8)}{8}\chi_{_{[0,0.8]}}(x)+\frac{\left(1-p_1(\frac{x}{2}-\frac{3}{10})\right)f(\frac{x}{2}-\frac{3}{10})}{2}\chi_{_{[0.8,1]}}(x)
\\
& &
+\frac 15\sum_{i=3}^6f\left(\tau_{1,i}^{-1}(x)\right).
\end{eqnarray*}
If $f(x)$ defined in (\ref{den_desired}) is an invariant density of $R$, then we must have
\[\left(P_{R}f\right)(x)=f(x),\ a.e.\]
Thus, we obtain
\begin{eqnarray*}
&&\frac{15}{31}p_1(\frac x2)\chi_{_{[0,0.2]}}(x)+\frac{15}{124}p_1(\frac{x}{8}+\frac{3}{40})\chi_{_{[0.2,1]}}(x)\\
& &-\frac{15}{124}p_1(\frac x8)\chi_{_{[0,0.8]}}(x)-\frac{15}{31}p_1(\frac{x}{2}-\frac{3}{10})\chi_{_{[0.8,1]}}(x)\\
&=&\frac{5}{124}\chi_{_{[0,0.6]}}(x)+\frac{15}{124}\chi_{_{[0.6,0.8]}}(x)-\frac{15}{62}\chi_{_{[0.8,1]}}(x).
\end{eqnarray*}
Now, if $x\in[0.6,0.8]$, we have
\begin{equation}\label{prob1}p_1(\frac x8 +\frac{3}{40})-p_1(\frac x8)=1.\end{equation}
Thus, $p_1(z)=0$ for $z\in [0.075, 0.1]$ while $p_1(z)=1$ for $z\in [0.15, 0.175]$. Now consider $x\in [0.9, 0.95]$, we have
\begin{equation}\label{prob2}p_1(\frac{x}{8}+\frac{3}{40})=4p_1(\frac x2 -\frac{3}{10})-2.\end{equation}
Since $\frac x2 -\frac{3}{10}\in [0.15, 0.175]$, it follows from Equation (\ref{prob1}) that
\[p_1(\frac{x}{8}+\frac{3}{40})=2,\ \ x\in[0.9, 0.95],\]
which is a contradiction to $p_1$ being a probability function.
\end{example}

In summary, the ability to represent the long term statistical dynamics of
an arbitrary selector by the long term statistical dynamics of a position
dependent random map based on the boundaries is a very sensitive matter.

\section{Classes of selectors which can be ``represented" by random maps
on the boundaries}\label{pre_sel}

Without loss of generality, we assume that $\tau_1$ and $%
\tau_2$ have a common partition: $a_0=0<a_1<a_2<\cdots<a_{m}=1$. Let $I_j=[a_{j-1},a_j]$, $%
j=1,2,3,\ldots,m$. On each interval $I_j$, $\tau_{1,j}:=\tau_1|_{_{I_j}}$
and $\tau_{2,j}:=\tau_2|_{_{I_j}}$ share the same monotonicity, where we
understand $\tau_{1,j}$ and $\tau_{2,j}$ as the natural extensions of pieces
of $\tau_1$ and $\tau_2$, respectively.

Let us assume that $\tau_1$ and $\tau_2$ preserve measures $\mu_1$ and $\mu_2$, respectively.
Let $F^{(1)},F^{(2)}$ be the distribution functions of measures $\mu_1$, $\mu_2$,
respectively, i.e. $F^{(i)}(x)=\mu_i([0,x])$, $i=1,2$. Let $\mu=\lambda\mu_1+(1-%
\lambda)\mu_2$, $0<\lambda<1$, and let $F$ be the distribution function of $%
\mu$, i.e. $F(x)=\mu([0,x])$. Let $\mathcal{T}$ be the class of piecewise $C^{1}$ maps from $I$ into $I$ whose graphs belong to $G$. In \cite{GLB13}, the following theorem is proved.
\begin{theorem}\label{Previous_selector_result}
Let $\tau_1, \tau_2 \in \mathcal{T}$ and assume they possess continuous invariant distribution functions $F^{(1)}$ and $F^{(2)}$. Then,
for any convex combination $F=\lambda F^{(1)}+(1-\lambda) F^{(2)}$, $0<\lambda<1$,
there exists a piecewise monotonic selector $\boldsymbol\tau$, $\tau_1\le\boldsymbol\tau\le\tau_2$, preserving the distribution function $F$.
\end{theorem}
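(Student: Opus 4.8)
The plan is to build the selector one branch at a time and to let a monotone rearrangement do the work on each branch, so that measure preservation follows by additivity. Fix the common partition $\{I_j\}_{j=1}^m$ and recall that, since $\tau_1$ preserves $\mu_1$ and $\tau_2$ preserves $\mu_2$, we have the branch decompositions $\sum_j (\tau_{1,j})_{*}(\mu_1|_{I_j}) = \mu_1$ and $\sum_j (\tau_{2,j})_{*}(\mu_2|_{I_j}) = \mu_2$. For each $j$ I would set the target measure
\[
\theta_j \;=\; \lambda\,(\tau_{1,j})_{*}(\mu_1|_{I_j}) \;+\; (1-\lambda)\,(\tau_{2,j})_{*}(\mu_2|_{I_j}),
\]
so that $\sum_j \theta_j = \lambda\mu_1 + (1-\lambda)\mu_2 = \mu$. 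If on each $I_j$ I can produce a monotone map $\boldsymbol\tau_j$ with $(\boldsymbol\tau_j)_{*}(\mu|_{I_j}) = \theta_j$, then gluing the $\boldsymbol\tau_j$ gives a piecewise monotonic $\boldsymbol\tau$ with $(\boldsymbol\tau)_{*}\mu = \sum_j (\boldsymbol\tau_j)_{*}(\mu|_{I_j}) = \mu$, i.e.\ $\boldsymbol\tau$ preserves $F$. The total masses already match, since $\theta_j(I) = \lambda\mu_1(I_j) + (1-\lambda)\mu_2(I_j) = \mu(I_j)$, which is exactly the mass $\boldsymbol\tau_j$ must transport.

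Next I would construct $\boldsymbol\tau_j$ explicitly by monotone rearrangement. Write $\Phi(x) = \mu([a_{j-1},x])$ for the distribution function of $\mu|_{I_j}$ and $\Theta_j(y) = \theta_j([0,y]) = \lambda\,(\tau_{1,j})_{*}(\mu_1|_{I_j})([0,y]) + (1-\lambda)\,(\tau_{2,j})_{*}(\mu_2|_{I_j})([0,y])$ for that of $\theta_j$; both are continuous because $F^{(1)},F^{(2)}$ are, so all measures in sight are non-atomic. On an interval $I_j$ where both boundary branches are increasing, define $\boldsymbol\tau_j(x) = \Theta_j^{-1}(\Phi(x))$ using the right-continuous generalized inverse. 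The standard rearrangement fact (push a non-atomic measure forward by its own distribution function to obtain a uniform measure, then by $\Theta_j^{-1}$ to obtain $\theta_j$) yields $(\boldsymbol\tau_j)_{*}(\mu|_{I_j}) = \theta_j$, and $\boldsymbol\tau_j$ is increasing.

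The crux is the selector constraint $\tau_{1,j} \le \boldsymbol\tau_j \le \tau_{2,j}$, and this is where the shared-monotonicity hypothesis enters. Because $\Theta_j$ is increasing, $\tau_{1,j}(x) \le \boldsymbol\tau_j(x)$ reduces to $\Theta_j(\tau_{1,j}(x)) \le \Phi(x)$. Evaluating $\Theta_j$ at $\tau_{1,j}(x)$, the $\mu_1$-term equals $\lambda\,\mu_1\{x': \tau_{1,j}(x') \le \tau_{1,j}(x)\} = \lambda\,\mu_1([a_{j-1},x])$, while the $\mu_2$-term is $(1-\lambda)\,\mu_2\{x': \tau_{2,j}(x') \le \tau_{1,j}(x)\}$, and since $\tau_{2,j} \ge \tau_{1,j}$ with $\tau_{2,j}$ increasing this set is contained in $[a_{j-1},x]$; hence $\Theta_j(\tau_{1,j}(x)) \le \lambda\mu_1([a_{j-1},x]) + (1-\lambda)\mu_2([a_{j-1},x]) = \Phi(x)$. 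The inequality $\boldsymbol\tau_j \le \tau_{2,j}$ follows symmetrically from $\Theta_j(\tau_{2,j}(x)) \ge \Phi(x)$, using $\{x': \tau_{1,j}(x') \le \tau_{2,j}(x)\} \supseteq [a_{j-1},x]$. Decreasing branches are handled the same way with the decreasing rearrangement and the inequalities reversed consistently. I expect the main obstacle to be essentially technical rather than conceptual: since $F$, $\Phi$ and $\Theta_j$ need not be strictly increasing, the generalized inverse may be flat or multivalued on intervals carrying no mass, so one must verify that $\boldsymbol\tau_j$ is a genuine (a.e.\ defined, monotone) function whose graph still lies in $G$, and that the two comparison inequalities survive passage through $\Theta_j^{-1}$. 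The convexity and shared-monotonicity argument above is robust, but these degenerate cases are exactly where care is needed.
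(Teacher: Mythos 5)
Your construction is correct and is essentially the paper's: the map $\boldsymbol\tau_j=\Theta_j^{-1}\circ\Phi$ is exactly the inverse-function form of the paper's formula (\ref{formula_eta}), $\overline{\boldsymbol\tau_{j}^{-1}}(x)=F^{-1}\bigl(\lambda F^{(1)}(\overline{\tau_{1,j}^{-1}}(x))+(1-\lambda) F^{(2)}(\overline{\tau_{2,j}^{-1}}(x))\bigr)$, and your branch-by-branch verification of measure preservation and of $\tau_1\le\boldsymbol\tau\le\tau_2$ mirrors the argument the paper carries out for the analogous Theorem \ref{Th_rand_sel}. The paper itself defers the proof to \cite{GLB13}, but the stated formula and the proof pattern of Theorem \ref{Th_rand_sel} make the intended argument clear, and yours matches it.
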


Moreover, the formula of each piece of $\boldsymbol\tau$ is given.
For any interval $[a,b]\subseteq [0,1]$, given a monotone continuous
function $h:[a,b]\rightarrow [0,1]$, we define its extended inverse as
follows. Let
\begin{equation*}
h^{\max}=\max\left\{h(x)|x\in[a,b]\right\},\text{ and }
h^{\min}=\min\left\{h(x)|x\in[a,b]\right\}.
\end{equation*}
Depending on $h$ is increasing or decreasing, its extended inverse is defined as
\begin{equation*}
\overline{h^{-1}}(x)=%
\begin{cases}
a \ , \text{\ for\ } \ x\in[0,h^{\min}]\ ; \\
h^{-1}(x) \ , \text{\ for\ } \ x\in[h^{\min},h^{\max}]\ ; \\
b \ , \text{\ for\ } \ x\in[h^{\max},1]\ ;
\end{cases}\ \ \ \
\overline{h^{-1}}(x)=
\begin{cases}
b \ , \text{\ for\ } \ x\in[0,h^{\min}]\ ; \\
h^{-1}(x) \ , \text{\ for\ } \ x\in[h^{\min},h^{\max}]\ ; \\
a \ , \text{\ for\ } \ x\in[h^{\max},1]\ , \\
\end{cases}
\end{equation*}
respectively. We define the extended inverse of each branch of $\boldsymbol\tau$ by
\begin{equation}  \label{formula_eta}
\overline{\boldsymbol\tau_{j}^{-1}}(x)=F^{-1}\left(\lambda F^{(1)}(\overline{%
\tau_{1,j}^{-1}}(x))+(1-\lambda) F^{(2)}(\overline{\tau_{2,j}^{-1}}%
(x))\right)\, ,
\end{equation}
where $j=1,2,3,\ldots,m$. $\boldsymbol\tau$ defined in this way, after the vertical
segments are removed, has the same number of branches as $\tau_1$ and $\tau_2$.

In equation (\ref{formula_eta}), if $\lambda=0$, then our selector $\boldsymbol\tau$
is $\tau_2$; if $\lambda=1$, then the selector $\boldsymbol\tau$
is $\tau_1$. Therefore, as $\lambda$ varies from 0 to 1, we have a
decomposition of $G$ into a pairwise disjoint union of curves, which can be
though of as a kind of ``foliation" of $G.$

The above Theorem \ref{Previous_selector_result} and the Theorem 4 in \cite{GB03} together
establish the following corollary.

\begin{corollary}\label{coro_cov}
Let $\boldsymbol\tau$ be the selector that preserves the density $f=\lambda f_1+(1-\lambda)f_2$. Then there exists a probability function $p$ such that random map $R=\left\{\tau _{1},\tau _{2};p,1-p\right\}$ also preserves $f$.
\end{corollary}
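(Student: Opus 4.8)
The plan is to deduce the corollary directly by matching two existence results that are already in hand: Theorem~\ref{Previous_selector_result} supplies a selector with the prescribed density, while Theorem~4 of \cite{GB03} supplies a random map with the same density. The only genuine work is to check that the convex combination $f=\lambda f_1+(1-\lambda)f_2$ meets the hypotheses of the latter and that the resulting weight is admissible.

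First I would rewrite the target density in the form used by \cite{GB03}. Since $0<\lambda<1$, setting $c_1=\lambda$ and $c_2=1-\lambda$ gives two strictly positive constants with $f=c_1 f_1+c_2 f_2$, where $f_i$ is the $\tau_i$-invariant density (the density of $\mu_i$, whose distribution function is $F^{(i)}$). By Theorem~\ref{Previous_selector_result} there is a piecewise monotonic selector $\boldsymbol\tau$ with $\tau_1\le\boldsymbol\tau\le\tau_2$ preserving $f$; this is exactly the selector named in the statement.

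Next I would invoke Theorem~4 of \cite{GB03} with these constants. It produces the explicit weight
\[
p(x)=\frac{c_1 f_1(x)}{c_1 f_1(x)+c_2 f_2(x)}=\frac{\lambda f_1(x)}{f(x)},
\]
under the convention $0/0=0$, and asserts that the random map $R=\{\tau_1,\tau_2;p,1-p\}$ leaves $f$ invariant. Thus both $\boldsymbol\tau$ and $R$ preserve the same density, which is precisely the assertion of the corollary.

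The one point needing verification, and the closest thing to an obstacle, is that this $p$ is a genuine probability function, i.e. $0\le p(x)\le 1$ a.e. This is immediate: the numerator $\lambda f_1$ and the denominator $f=\lambda f_1+(1-\lambda)f_2$ are both nonnegative, and the numerator never exceeds the denominator because $(1-\lambda)f_2\ge 0$. On the set where $f>0$ the quotient therefore lies in $[0,1]$, while on $\{f=0\}$ (where necessarily $f_1=f_2=0$, since $\lambda$ and $1-\lambda$ are positive) the convention $0/0=0$ assigns $p=0$. Hence $p$ is admissible and the proof is complete.
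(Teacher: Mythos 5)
Your proposal is correct and follows essentially the same route as the paper, which likewise obtains the corollary by combining Theorem~\ref{Previous_selector_result} with Theorem~4 of \cite{GB03} and its explicit weight $p=\lambda f_1/(\lambda f_1+(1-\lambda)f_2)$ under the convention $0/0=0$. Your added verification that $0\le p\le 1$ is a reasonable (if routine) elaboration of a point the paper leaves implicit.
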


From Theorem 6 in \cite{GLB13} and Corollary \ref{coro_cov}, it follows that we can consider a more general case:
\begin{theorem}
Let $\tau_i=h_i^{-1}\circ\Lambda\circ h_i$, $i=1,2$, where $\Lambda$ is a triangle map, $h_1$ and $h_2$ are two diffeomorphisms. Define a diffeomorphism $h=\lambda h_1+
(1-\lambda) h_2$. Then the map $\boldsymbol\tau=h^{-1}\circ\Lambda\circ h$ is a selector between $\tau_1$ and $\tau_2$,
and thus its pdf can be preserved by a random map defined by
$\tau_1$ and $\tau_2$.
\end{theorem}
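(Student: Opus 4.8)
The plan is to reduce the statement to Theorem \ref{Previous_selector_result} and Corollary \ref{coro_cov} by first identifying the invariant data of the three conjugated maps, and then observing that the piecewise linearity of the triangle map $\Lambda$ forces the abstract selector formula (\ref{formula_eta}) to collapse onto the conjugation $h^{-1}\circ\Lambda\circ h$.

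First I would record the invariant measures. Since $\Lambda$ is a triangle map it preserves Lebesgue measure, so the conjugacy $\tau_i=h_i^{-1}\circ\Lambda\circ h_i$ preserves the measure $\mu_i$ whose density is $f_i=h_i'$ and whose distribution function is $F^{(i)}=h_i$ (normalizing $h_i(0)=0$, $h_i(1)=1$); this is the standard change-of-variables identity $P_{\tau_i}(h_i')=h_i'$. Because $h_1',h_2'>0$ and $0<\lambda<1$, the convex combination $h=\lambda h_1+(1-\lambda)h_2$ is again an increasing diffeomorphism of $[0,1]$ fixing the endpoints, so the same computation shows that $\boldsymbol\tau=h^{-1}\circ\Lambda\circ h$ preserves the measure with density $f=h'=\lambda h_1'+(1-\lambda)h_2'=\lambda f_1+(1-\lambda)f_2$ and distribution function $F=h=\lambda F^{(1)}+(1-\lambda)F^{(2)}$. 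Thus $\boldsymbol\tau$ preserves exactly the convex combination called for in Theorem \ref{Previous_selector_result}.

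Next I would show that this $\boldsymbol\tau$ is the very selector produced by that theorem. Writing $\Lambda_j$ for the $j$-th (affine) branch of the triangle map and using, on each branch, $h_i\circ\overline{\tau_{i,j}^{-1}}=\Lambda_j^{-1}\circ h_i$, formula (\ref{formula_eta}) gives
\begin{equation*}
\overline{\boldsymbol\tau_j^{-1}}(x)=h^{-1}\!\left(\lambda\,\Lambda_j^{-1}(h_1(x))+(1-\lambda)\,\Lambda_j^{-1}(h_2(x))\right).
\end{equation*}
Since each $\Lambda_j^{-1}$ is affine, it commutes with the convex combination, so the right-hand side equals $h^{-1}(\Lambda_j^{-1}(\lambda h_1(x)+(1-\lambda)h_2(x)))=h^{-1}(\Lambda_j^{-1}(h(x)))$, which is precisely the branch inverse of $h^{-1}\circ\Lambda\circ h$. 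Hence the selector whose existence is asserted by Theorem \ref{Previous_selector_result} coincides with $\boldsymbol\tau$, and in particular $\boldsymbol\tau$ lies in $G$ between $\tau_1$ and $\tau_2$; alternatively one invokes Theorem 6 of \cite{GLB13} directly for this identification in the general conjugate setting.

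Finally, with $\boldsymbol\tau$ established as a selector whose invariant density is the convex combination $f=\lambda f_1+(1-\lambda)f_2$ of the invariant densities of the boundary maps, Corollary \ref{coro_cov} immediately supplies a probability function $p$ (explicitly $p=\lambda f_1/(\lambda f_1+(1-\lambda)f_2)$) for which the random map $R=\{\tau_1,\tau_2;p,1-p\}$ also preserves $f$, which is the desired holographic representation. The step I expect to be the main obstacle is the selector verification in the third paragraph: the affine-branch collapse is clean when the turning points of $\tau_1$ and $\tau_2$ are aligned, but when the diffeomorphisms $h_1,h_2$ send the critical points of $\Lambda$ to different points the two boundary maps no longer share a common monotonicity partition, so one must either restrict to that aligned case or appeal to the more general foliation statement of Theorem 6 in \cite{GLB13} to justify that the graph of $\boldsymbol\tau$ indeed lies in $G$.
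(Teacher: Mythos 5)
Your proposal is correct and follows essentially the same route as the paper, which offers no written proof beyond the single remark that the theorem ``follows from Theorem 6 in \cite{GLB13} and Corollary \ref{coro_cov}''; your computation (identifying $F^{(i)}=h_i$, $f_i=h_i'$, collapsing formula (\ref{formula_eta}) through the affine branches of $\Lambda$, and then invoking Corollary \ref{coro_cov}) is precisely the derivation that remark is gesturing at. Your closing caveat about misaligned turning points breaking the common monotonicity partition is well taken, and deferring that case to Theorem 6 of \cite{GLB13} is exactly what the authors do.
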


\section{Functional equations for probability functions}

\subsection{For Lebesgue measure}

Let us recall the setting for the boundary maps as in Example \ref{quadratic} and the functional equation (\ref{main_eq_qua}), the second branch of the boundary maps is linear, but now we do not require the first branches of our
maps to be quadratic.

We choose $0<\widehat{a}$ such that $$\tau_{21}'(x)\le \sigma<1\ \  \text{on}\ \ [0,\widehat{a}]\ ,\ \widehat{a}<a\ .$$
Note that $\sigma$ depends on the choice of $\widehat{a}$. Let us define an affine operator
$$\left(\mathcal{P} p\right)(x)=p(\tau_{21}(x))\cdot\tau'_{21}(x)-\left[\tau_{21}'(x)-a\cdot\tau_1'(x)\right]\ .$$
The most natural space on which to consider the operator is $L^\infty[0,a]$, the space of bounded functions on the interval $[0,a]$.
Note that $\mathcal{P}$ preserves the integral on $[0,1]$ with respect to Lebesgue measure.

First, we will consider $\mathcal{P}$ on the smaller space $L^\infty[0,\widehat{a}] $.

\begin{proposition}\label{Prop_p1}
$\mathcal{P}$ is a contraction on $L^\infty[0,\widehat{a}]$.
\end{proposition}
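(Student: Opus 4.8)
The plan is to exploit that $\mathcal{P}$ is affine, so that when I subtract $\mathcal{P}q$ from $\mathcal{P}p$ the inhomogeneous term $-[\tau'_{21}(x)-a\cdot\tau'_1(x)]$ cancels completely, leaving only the linear part. First I would compute, for $p,q\in L^\infty[0,\widehat{a}]$,
\[
(\mathcal{P}p)(x)-(\mathcal{P}q)(x)=\bigl(p(\tau_{21}(x))-q(\tau_{21}(x))\bigr)\cdot\tau'_{21}(x).
\]
The entire argument then reduces to estimating the sup-norm of the right-hand side, which is a product of a ``displaced'' difference of $p$ and $q$ with the derivative $\tau'_{21}$.

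Before taking norms, the point that must be verified is that $p\circ\tau_{21}$ is again controlled by $\|p-q\|_\infty$ measured over $[0,\widehat{a}]$; equivalently, that $\tau_{21}$ maps $[0,\widehat{a}]$ into itself. This follows from the properties recorded in Example \ref{quadratic}: $\tau_{21}$ is increasing with $\tau_{21}(x)<x$ on $(0,a)$, so for $x\in[0,\widehat{a}]$ monotonicity gives $\tau_{21}(x)\le\tau_{21}(\widehat{a})<\widehat{a}$, whence $\tau_{21}([0,\widehat{a}])\subseteq[0,\widehat{a}]$. Consequently $|p(\tau_{21}(x))-q(\tau_{21}(x))|\le\|p-q\|_{L^\infty[0,\widehat{a}]}$ for every $x\in[0,\widehat{a}]$.

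Combining this with the derivative bound $\tau'_{21}(x)\le\sigma<1$ on $[0,\widehat{a}]$, which holds by the very choice of $\widehat{a}$, I would conclude
\[
\|\mathcal{P}p-\mathcal{P}q\|_{L^\infty[0,\widehat{a}]}\le\sigma\cdot\|p-q\|_{L^\infty[0,\widehat{a}]},
\]
and since $\sigma<1$ this is precisely the contraction estimate, giving the proposition.

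The computation is short; the only genuine obstacle is the invariance step $\tau_{21}([0,\widehat{a}])\subseteq[0,\widehat{a}]$, since without it the point $\tau_{21}(x)$ could leave the interval on which the sup-norm is measured and the bound $|p(\tau_{21}(x))-q(\tau_{21}(x))|\le\|p-q\|_\infty$ would fail. Everything else is an immediate consequence of the affineness of $\mathcal{P}$ together with the a priori contraction of $\tau_{21}$ built into the hypothesis on $\widehat{a}$.
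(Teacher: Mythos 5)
Your proof is correct and takes essentially the same route as the paper's: subtract to cancel the affine term, then bound the sup-norm of $\bigl(p(\tau_{21}(x))-q(\tau_{21}(x))\bigr)\tau'_{21}(x)$ by $\sigma\,\|p-q\|_{L^\infty[0,\widehat{a}]}$. Your explicit verification that $\tau_{21}([0,\widehat{a}])\subseteq[0,\widehat{a}]$ is a point the paper leaves implicit but which is genuinely needed for the estimate, so this is a welcome addition rather than a deviation.
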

\begin{proof}  We have
\begin{eqnarray*}\|\mathcal{P} p-\mathcal{P} q\|&=&\sup_{x\in [0,\widehat{a}]}|p(\tau_{21}(x))\cdot\tau'_{21}(x)-q(\tau_{21}(x))\cdot\tau'_{21}(x)|\\
&\le&
\sup_{x\in [0,\widehat{a}]}|p(\tau_{21}(x))-q(\tau_{21}(x))|\cdot\sup_{x\in [0,\widehat{a}]}|\tau'_{21}(x)|\le\lambda \| p- q\|\ .
\end{eqnarray*}
\end{proof}

\begin{proposition} Equation (\ref{main_eq_qua}), which applies to selectors with pdf$\equiv 1$, considered on $[0,\widehat{a}]$ has exactly one solution.
\end{proposition}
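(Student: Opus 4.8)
The plan is to read equation (\ref{main_eq_qua}) as the fixed-point equation $\mathcal{P}p = p$ for the affine operator $\mathcal{P}$ introduced just above the proposition, and then invoke the Banach fixed point theorem. By the very definition of $\mathcal{P}$, a function $p$ solves (\ref{main_eq_qua}) on $[0,\widehat{a}]$ precisely when it is a fixed point of $\mathcal{P}$, so the assertion ``exactly one solution'' is the same as ``$\mathcal{P}$ has a unique fixed point.''

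Before applying the fixed point theorem I would verify that $\mathcal{P}$ is a genuine self-map of $L^\infty[0,\widehat{a}]$. The one point needing care is that the composition $p\circ\tau_{21}$ must make sense when $p$ is only defined on the smaller interval $[0,\widehat{a}]$; this requires the invariance $\tau_{21}([0,\widehat{a}])\subseteq[0,\widehat{a}]$. Since $\tau_{21}(0)=0$ and $\tau_{21}'\le\sigma<1$ on $[0,\widehat{a}]$ by the choice of $\widehat{a}$, the mean value theorem gives $\tau_{21}(x)\le\sigma x\le\sigma\widehat{a}<\widehat{a}$ for every $x\in[0,\widehat{a}]$, so the inclusion holds. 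Moreover $\mathcal{P}p$ is bounded whenever $p$ is, because $\tau_{21}'$ and $\tau_1'$ are bounded on $[0,\widehat{a}]$; hence $\mathcal{P}$ maps $L^\infty[0,\widehat{a}]$ into itself.

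With $\mathcal{P}$ established as a self-map, the remainder is immediate: $L^\infty[0,\widehat{a}]$ equipped with the supremum norm is a complete metric space, and by Proposition \ref{Prop_p1} the operator $\mathcal{P}$ is a contraction with constant at most $\sigma<1$. The Banach fixed point theorem then yields a unique fixed point $p\in L^\infty[0,\widehat{a}]$, which is exactly the unique solution of (\ref{main_eq_qua}) on $[0,\widehat{a}]$, proving the proposition.

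The hard part, such as it is, was already absorbed into Proposition \ref{Prop_p1}, where the contraction estimate is derived from $\sup|\tau_{21}'|\le\sigma<1$. What remains genuinely delicate here is only the well-definedness check above, namely confirming that $\tau_{21}$ keeps the truncated interval $[0,\widehat{a}]$ invariant so that the affine operator acts within the chosen Banach space; once that is in place, existence and uniqueness come together for free from the contraction principle, the affineness of $\mathcal{P}$ playing no role since the theorem applies to any contraction.
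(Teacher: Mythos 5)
Your proposal is correct and follows essentially the same route as the paper, which proves the proposition in one line by invoking Banach's contraction principle together with the preceding contraction estimate. The only addition is your explicit check that $\tau_{21}$ maps $[0,\widehat{a}]$ into itself so that $\mathcal{P}$ is a genuine self-map; this is a detail the paper leaves implicit, and your verification of it is valid.
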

\begin{proof}This follows by Banach's contraction principle.
\end{proof}

\begin{proposition} Using Theorem 2.2.1 in \cite{KCG}, the solution on $[0,\widehat{a}]$ is given by
\begin{equation}\label{sol}
p(x)=\sum_{n=0}^\infty B(\tau_{21}^n(x))\cdot (\tau_{21}^n)'(x) \ ,
\end{equation}
where $B(x)=-[\tau_{21}'(x)-a\tau_1'(x)]=a\tau_1'(x)-\tau_{21}'(x) $.
\end{proposition}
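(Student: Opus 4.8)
The plan is to recognize the functional equation (\ref{main_eq_qua}) as a first-order linear functional equation of the form treated by Theorem 2.2.1 in \cite{KCG}, and to exhibit its solution as a convergent Neumann-type series. Writing $B(x)=a\tau_1'(x)-\tau_{21}'(x)$, equation (\ref{main_eq_qua}) reads
\begin{equation*}
p(x)=B(x)+\tau_{21}'(x)\,p(\tau_{21}(x))=(\mathcal{P}p)(x),
\end{equation*}
so a solution is precisely a fixed point of the affine operator $\mathcal{P}$ from Proposition \ref{Prop_p1}. Since that proposition shows $\mathcal{P}$ is a contraction on $L^\infty[0,\widehat{a}]$ with factor $\sigma<1$, the fixed point is unique and is the uniform limit of the iterates $\mathcal{P}^N p_0$ from any starting point; the content of the present statement is to identify that limit with the series (\ref{sol}).

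First I would verify, by induction on $N$ and the chain rule, that iterating $\mathcal{P}$ gives
\begin{equation*}
(\mathcal{P}^N p)(x)=\sum_{n=0}^{N-1}B(\tau_{21}^n(x))\,(\tau_{21}^n)'(x)+(\tau_{21}^N)'(x)\,p(\tau_{21}^N(x)),
\end{equation*}
where the $n=0$ term is $B(x)$ because $\tau_{21}^0=\mathrm{id}$. The essential observation making the tail manageable is that $[0,\widehat{a}]$ is forward invariant under $\tau_{21}$: since $0\le\tau_{21}(x)\le x$ on $[0,a]$, every iterate $\tau_{21}^k(x)$ stays in $[0,\widehat{a}]$, where $|\tau_{21}'|\le\sigma$. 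Consequently the chain rule yields the geometric estimate $|(\tau_{21}^n)'(x)|=\prod_{k=0}^{n-1}|\tau_{21}'(\tau_{21}^k(x))|\le\sigma^n$.

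This estimate does two things at once. It bounds each summand by $\|B\|_\infty\,\sigma^n$, so the series (\ref{sol}) converges absolutely and uniformly on $[0,\widehat{a}]$ and defines a bounded function; and it bounds the remainder $(\tau_{21}^N)'(x)\,p(\tau_{21}^N(x))$ by $\sigma^N\|p\|_\infty\to 0$. Since $p$ is the fixed point, $\mathcal{P}^N p=p$ identically, so the iteration identity reads $p(x)$ on its left-hand side; letting $N\to\infty$ makes the remainder vanish and the partial sums converge to (\ref{sol}), which yields $p(x)=\sum_{n=0}^\infty B(\tau_{21}^n(x))\,(\tau_{21}^n)'(x)$. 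Alternatively, one checks directly that the series is a fixed point of $\mathcal{P}$ by reindexing $n\mapsto n+1$ and using $(\tau_{21}^n)'(\tau_{21}(x))\,\tau_{21}'(x)=(\tau_{21}^{n+1})'(x)$, so that $B(x)+\tau_{21}'(x)\,p(\tau_{21}(x))$ is the series with its $n=0$ term restored; the uniqueness from the preceding proposition then identifies the two.

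I expect the main obstacle to be the tail control rather than the algebra: one must be sure the whole forward orbit of each $x\in[0,\widehat{a}]$ remains in the region where the uniform bound $|\tau_{21}'|\le\sigma$ holds, so that the geometric estimate for $(\tau_{21}^n)'$ is legitimate along orbits and not merely pointwise. This is exactly why $\widehat{a}$ is chosen with $\widehat{a}<a$ and why the inequality $\tau_{21}(x)<x$ on $(0,a)$ is recorded; with forward invariance in hand, both the convergence of (\ref{sol}) and the vanishing of the remainder are immediate.
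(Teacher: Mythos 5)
Your proposal is correct and follows essentially the same route as the paper: the paper likewise obtains $p$ as the limit of the iterates $\mathcal{P}^n(1)$ of the contraction from Proposition \ref{Prop_p1}, notes that the series is dominated by a geometric series in $\sigma$, and also verifies the formula by direct substitution into (\ref{main_eq_qua}). Your write-up merely makes explicit two points the paper leaves implicit --- the closed form of $\mathcal{P}^N$ and the forward invariance of $[0,\widehat{a}]$ under $\tau_{21}$ that legitimizes the bound $|(\tau_{21}^n)'(x)|\le\sigma^n$ along orbits --- which is a welcome clarification but not a different argument.
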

\begin{proof} The solution $p$ can be obtained as a limit
of the functions $\mathcal{P}^n(1)$ \cite{BG97}. The  series (\ref{sol}) converges since it is dominated by a geometric series in $\sigma$.

On the other hand, we can just substitute the series (\ref{sol}) into (\ref{main_eq_qua}) to get:
$$ \sum_{n=0}^\infty B(\tau_{21}^n(x))\cdot (\tau_{21}^n)'(x) =
\sum_{n=0}^\infty B(\tau_{21}^n(\tau_{21}(x)))\cdot (\tau_{21}^n)'(\tau_{21}(x))\cdot\tau'_{21}(x)+B(x)\ ,$$
obtaining the desired equality.
\end{proof}

\textbf{Remarks on the series (\ref{sol}):} On every subinterval $[0,s]\subset [0,a)$ the convergence is uniform (for $\widehat{a}<s$ as well).
The series is divergent at $a$ (unless $B(a)=0$) so the value of $p(a)$ should be obtained using $p(\tau_{21}(a))$ and equation (\ref{main_eq_qua}).
Let us define
$$f_k(x)=\sum_{n=0}^k B(\tau_{21}^n(x))\cdot (\tau_{21}^n)'(x) \ \ ,\ \ k=0,1,2,\dots$$
Note that $\int_0^a f_k(x)dx =0$ for all $k\ge 0$, while for the limit $p$ we usually have $\int_0^a p(x)dx >0$. This,
 in general, means that the functions $f_k$ are not uniformly integrable (i.e., for arbitrary constant $M> 0$
and arbitrary $\eps>0$ we can find $k\ge 0$ such that $\int_{[a-\eps,a]} |f_k(x)| dx >M$) and the series in (\ref{sol}) does not converge in $L^1[0,a]$.

\begin{proposition}\label{Prop_p4}
Once the solution $p$ is known on $[0,\widehat{a}]$ it is uniquely extended to $[0,a)$.
The solution is still described by the formula (\ref{sol}). The value of $p(a)$ is obtained using $p(\tau_{21}(a))$ and equation (\ref{main_eq_qua}).
\end{proposition}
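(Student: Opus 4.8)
The plan is to exploit the fact that $\tau_{21}$ contracts every point of $[0,a)$ toward the fixed point $0$, so that the functional equation (\ref{main_eq_qua}), read as the explicit recursion $p(x)=p(\tau_{21}(x))\cdot\tau'_{21}(x)+B(x)$, propagates the values already known on $[0,\widehat a]$ to all of $[0,a)$. First I would record the dynamics of $\tau_{21}$: since it is increasing with $\tau_{21}(x)<x$ on $(0,a)$ and $\tau_{21}(0)=0$, for each $x\in[0,a)$ the orbit $\tau_{21}^n(x)$ decreases monotonically to $0$, and in particular $\tau_{21}$ maps $[0,\widehat a]$ into itself. On a compact subinterval $[0,s]\subset[0,a)$ the first entry time $N_s:=\min\{n:\tau_{21}^n(s)\le\widehat a\}$ is finite, and by monotonicity of $\tau_{21}^n$ every $x\in[0,s]$ enters $[0,\widehat a]$ within $N_s$ steps. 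Equivalently, $\tau_{21}^{-n}(\widehat a)\uparrow a$, so the intervals $[0,\tau_{21}^{-n}(\widehat a)]$ exhaust $[0,a)$; this is what makes the forward recursion terminate.

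Second, I would define the extension and prove uniqueness. On $[0,\widehat a]$ the solution is the one already obtained, and for $x\in(\widehat a,a)$ I set $p(x)=p(\tau_{21}(x))\tau'_{21}(x)+B(x)$; since $\tau_{21}(x)<x$, this is an induction over the exhausting intervals $[0,\tau_{21}^{-n}(\widehat a)]$ and is therefore well defined on all of $[0,a)$. Uniqueness is immediate: any solution of (\ref{main_eq_qua}) on $[0,a)$ restricts to a solution on $[0,\widehat a]$, which is unique by the contraction argument, and the equation then determines every remaining value through the same recursion.

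Third, I would verify that this extension is still given by the closed form (\ref{sol}). Iterating (\ref{main_eq_qua}) exactly $N$ times and using the chain rule $(\tau_{21}^m)'(\tau_{21}^N(x))\cdot(\tau_{21}^N)'(x)=(\tau_{21}^{m+N})'(x)$ gives
\begin{equation*}
p(x)=(\tau_{21}^N)'(x)\,p(\tau_{21}^N(x))+\sum_{n=0}^{N-1}B(\tau_{21}^n(x))\,(\tau_{21}^n)'(x).
\end{equation*}
Choosing $N$ so large that $\tau_{21}^N(x)\in[0,\widehat a]$ and substituting the series (\ref{sol}) for $p(\tau_{21}^N(x))$, the chain-rule identity telescopes the two pieces into the single series $\sum_{n=0}^\infty B(\tau_{21}^n(x))(\tau_{21}^n)'(x)$, which is exactly (\ref{sol}) at $x$. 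For convergence on $[0,s]\subset[0,a)$ I would factor, for $n\ge N_s$, $(\tau_{21}^n)'(x)=(\tau_{21}^{\,n-N_s})'(\tau_{21}^{N_s}(x))\cdot(\tau_{21}^{N_s})'(x)$: the first factor is at most $\sigma^{\,n-N_s}$ because the orbit has entered the invariant set $[0,\widehat a]$ where $\tau'_{21}\le\sigma$, and the second is bounded on $[0,s]$, so the tail is dominated by a geometric series and the convergence is uniform (hence $p$ is continuous on $[0,a)$).

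Finally, for the value at $a$ I would evaluate (\ref{main_eq_qua}) at $x=a$; since $\tau_{21}(a)=a$ this reads $p(a)=p(a)\,\tau'_{21}(a)+B(a)$, which, provided $\tau'_{21}(a)\ne 1$, solves to $p(a)=B(a)/(1-\tau'_{21}(a))$, the same value forced on any continuous extension. The main obstacle is precisely this fixed point at $a$: the orbit of $a$ never enters $[0,\widehat a]$, so the uniform hitting-time estimate and the series (\ref{sol}) both break down there, and $p(a)$ must be handled separately through this self-referential instance of the functional equation rather than through the closed form.
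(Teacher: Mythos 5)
Your proposal is correct and follows essentially the same route as the paper: extend $p$ from $[0,\widehat{a}]$ by reading equation (\ref{main_eq_qua}) as a forward recursion over the exhausting intervals $[\tau_{21}^{-n}(\widehat{a}),\tau_{21}^{-(n-1)}(\widehat{a}))$, whose union is $[0,a)$ because $\tau_{21}^{-n}(\widehat{a})\uparrow a$. You supply details the paper leaves implicit --- the telescoping chain-rule verification that formula (\ref{sol}) persists, the uniform convergence on compact subintervals of $[0,a)$, and the explicit self-referential solution $p(a)=B(a)/(1-\tau_{21}'(a))$ at the fixed point --- but the underlying argument is identical.
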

\begin{proof} First note that the function $\tau_{21}^{-1}$ has two fixed points, $0$ and $a$. Since $\tau_{21}(x)<x$ on $(0,a)$, the sequence $\{\tau_{21}^{-n}(\widehat{a})\}$ is strictly increasing and converges to the fixed point $a$ as $n\to\infty$. We extend $p$ using equation (\ref{main_eq_qua}): from $[\tau_{21}(\widehat{a}),\widehat{a})$
to $[\widehat{a},\tau_{21}^{-1}(\widehat{a}))$, then to $[\tau_{21}^{-1}(\widehat{a}),\tau_{21}^{-2}(\widehat{a}))$, then to
$[\tau_{21}^{-2}(\widehat{a}),\tau_{21}^{-3}(\widehat{a}))$, etc.

The solution is uniquely described by the formula (\ref{sol}).
\end{proof}
\begin{remark}
Note that $p$ is uniquely determined and independent of the choice of $\widehat{a}$.
\end{remark}

Example \ref{posit_ex} shows a successful application of Propositions \ref{Prop_p1}-\ref{Prop_p4} to find the probability function defining a random map with a required pdf. Example \ref{neg_ex} shows that the method of Propositions \ref{Prop_p1}-\ref{Prop_p4} sometimes fails. The function produced is not a probability function.
\begin{example}\label{posit_ex}
We set $a=1/2$. We consider $\tau_1(x)=2x^2+x$, $\tau_2(x)=-2x^2+3x$,
$\tau_{21}(x)=\tau_2^{-1}(\tau_1(x))=3/4-(1/4)\sqrt{9-16x^2-8x}$. Using Maple 13 we were able to guess that the solution is $p(x)=x+1/4$.
We illustrate the statements with a number of pictures depicted in Figs.\ref{fig:tau1_tau2}-\ref{fig:approximations50}.

\begin{figure}[h]
\begin{minipage}[t]{0.45\linewidth}
\centering
\includegraphics[width=\textwidth]{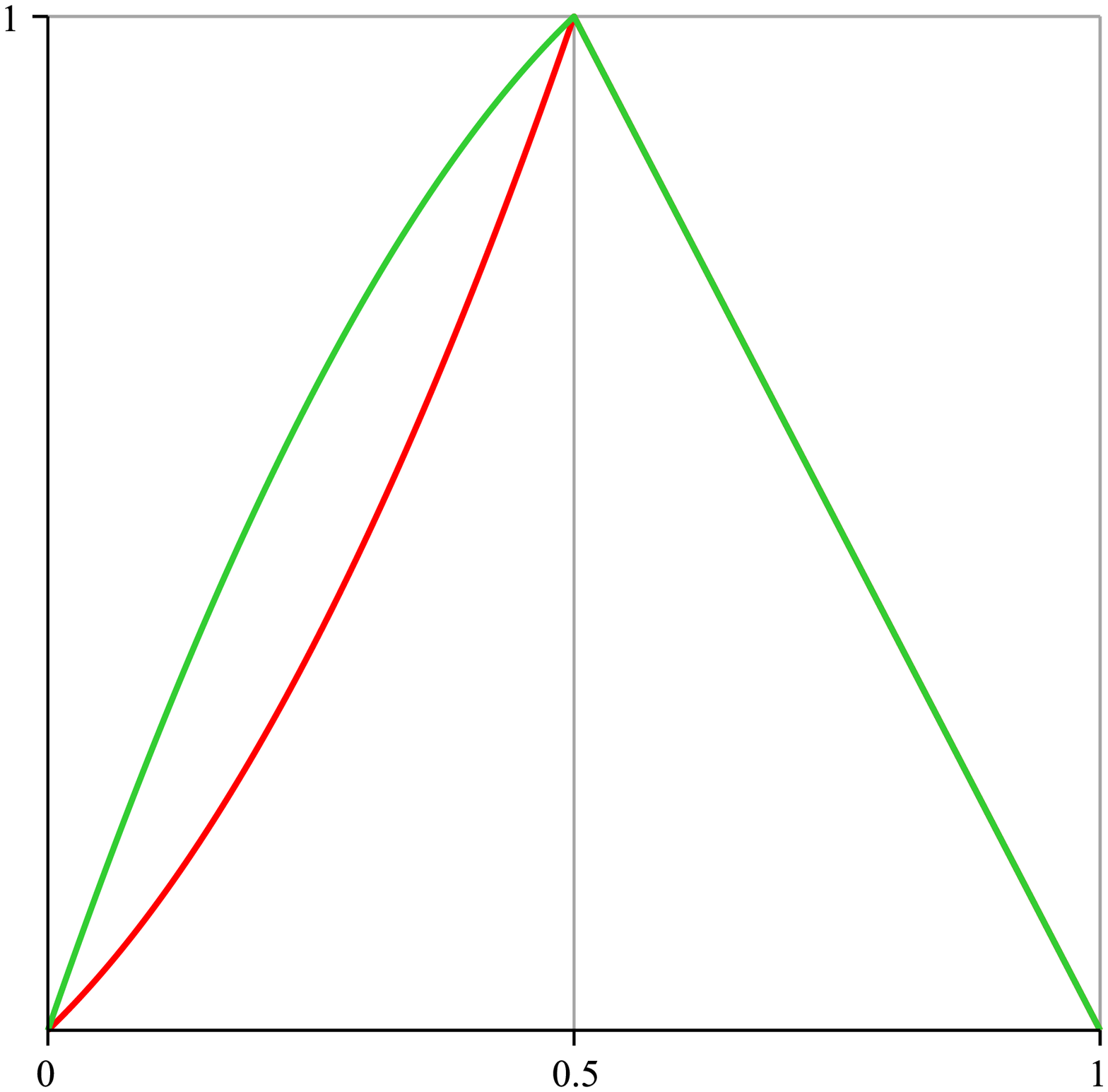}
\caption{Maps $\tau_1$ and $\tau_2$ for Example \ref{posit_ex}.}
\label{fig:tau1_tau2}
\end{minipage}
\hspace{0.5cm}
\begin{minipage}[t]{0.45\textwidth}
\centering
\includegraphics[width=\textwidth]{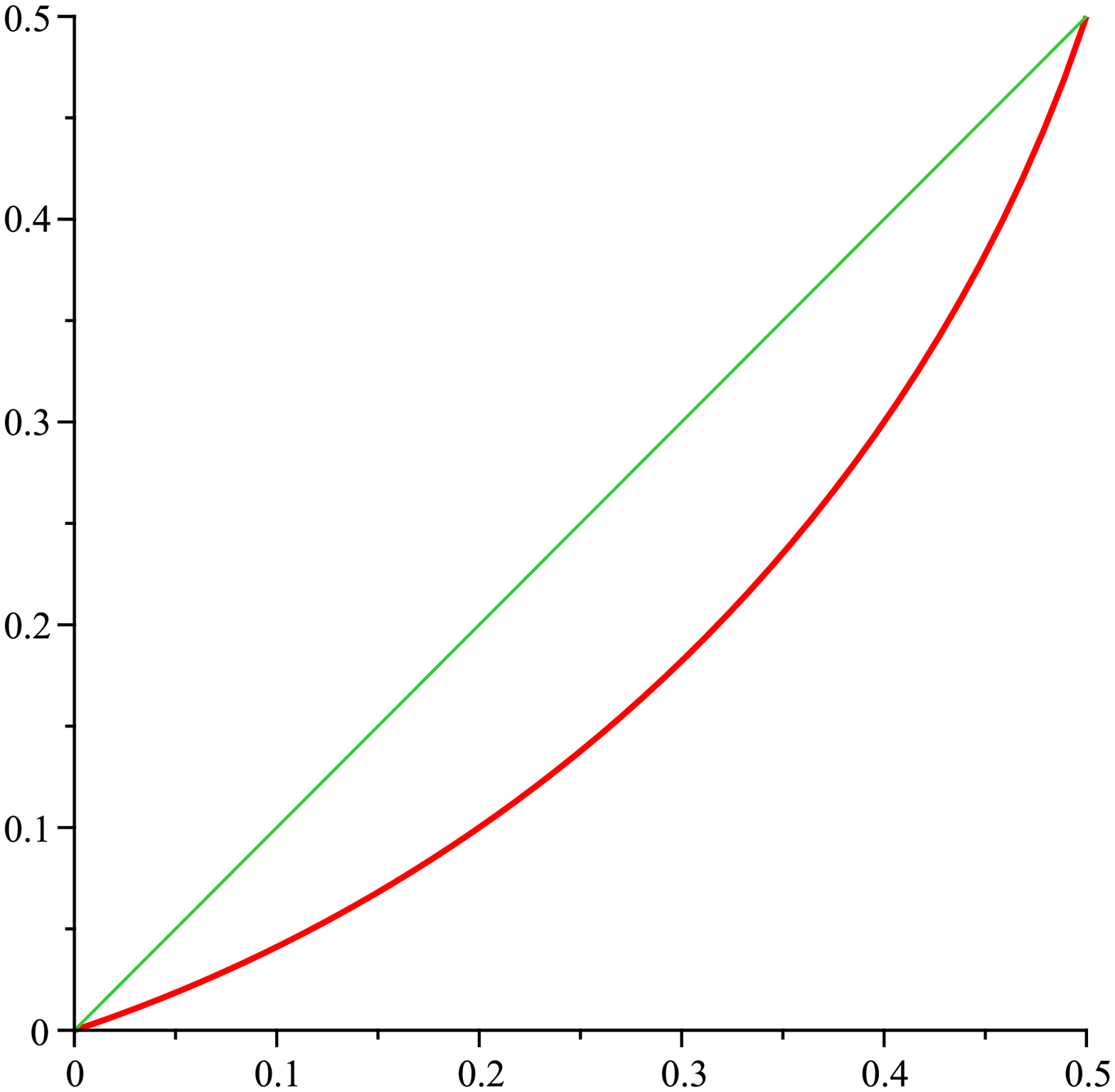}
\caption{Map $\tau_{21}$ for Example \ref{posit_ex}.}
\label{fig:tau21}
\end{minipage}
\end{figure}

\begin{figure}[h]
\begin{minipage}[t]{0.45\linewidth}
\centering
\includegraphics[width=\textwidth]{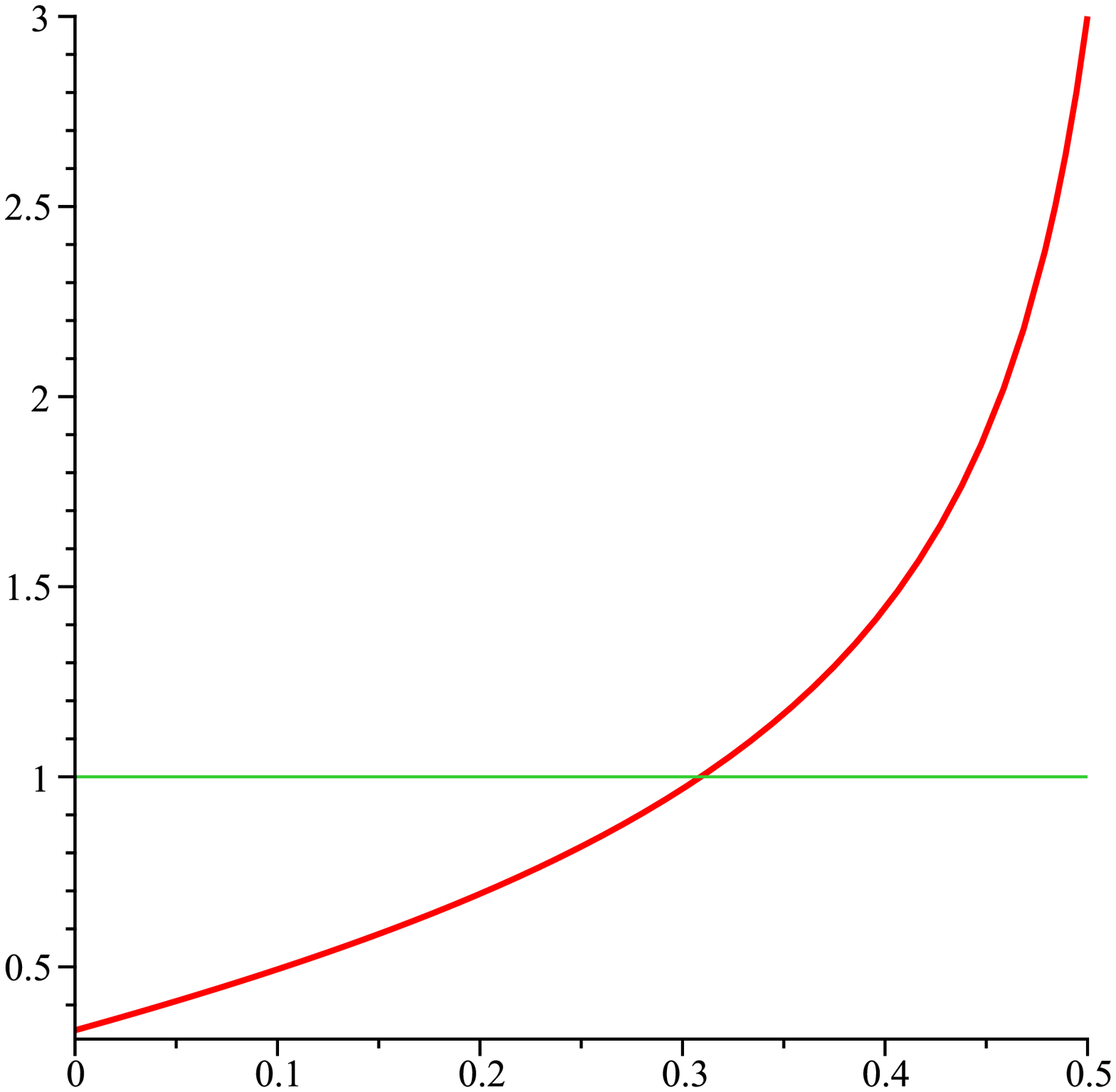}
\caption{Derivative $\tau_{21}'$ for Example \ref{posit_ex}.}
\label{fig:tau21prime}
\end{minipage}
\hspace{0.5cm}
\begin{minipage}[t]{0.45\linewidth}
\centering
\includegraphics[width=\textwidth]{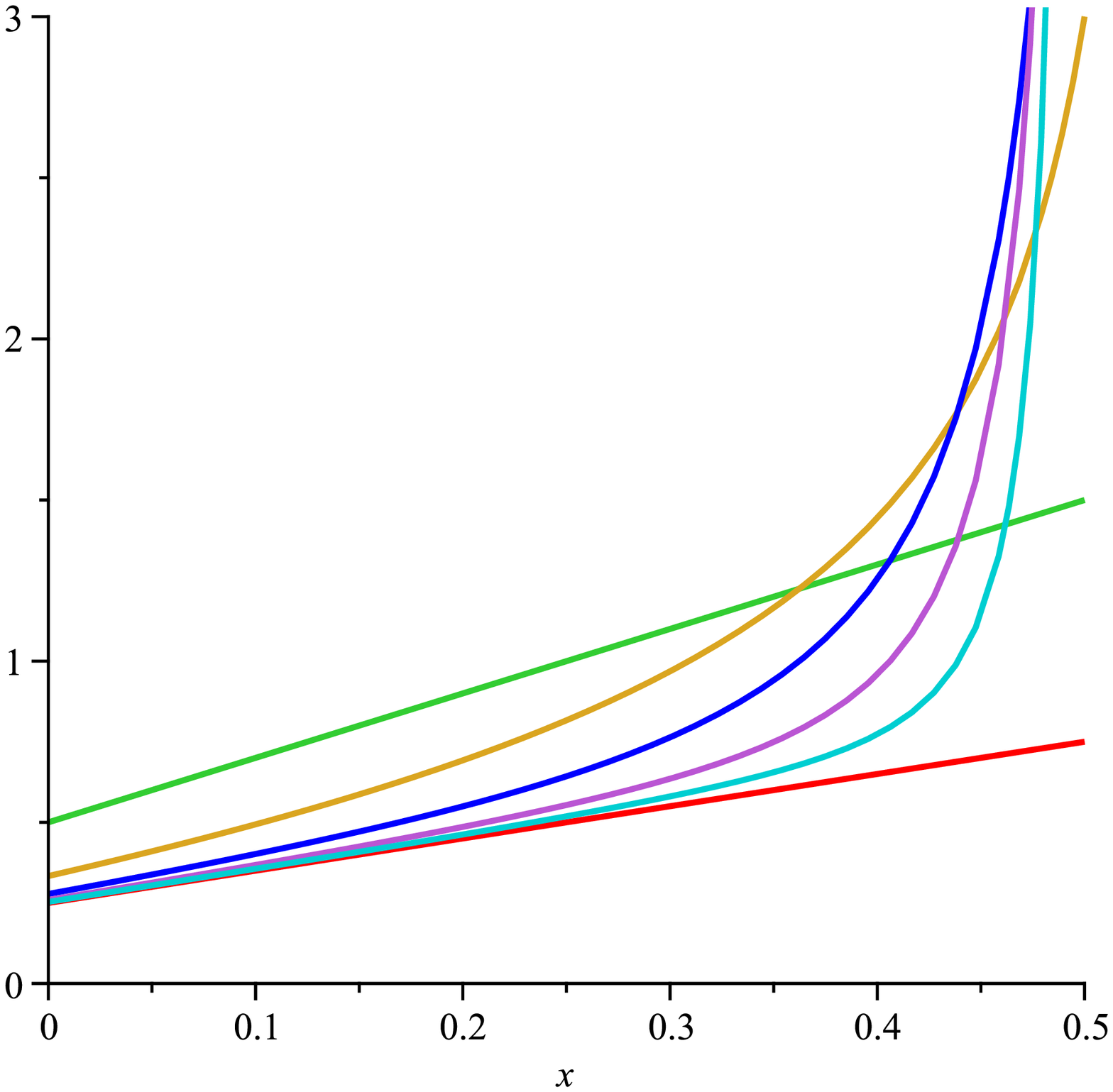}
\caption{First five iterations of approximating the invariant solution $p$ on $[0,0.5]$. Cut off at $3$ for Example \ref{posit_ex}.}
\label{fig:approximations50}
\end{minipage}
\end{figure}
\end{example}

\begin{example}\label{neg_ex}
In this example the solution $p$ is not between $0$ and $1$.
We set $a=1/5$ and consider
$$\tau_1(x)=\begin{cases} (4/3)x^2+1.365128205 x&\ ,\ \text{for}\ 0\le x< 0.13\ ;\\
                    138.8110936 x^2  -34.37908948 x+   2.323374150 &\ ,\ \text{for}\ 0.13\le x\le a\ ,
\end{cases}
$$
 and $\tau_2(x)=1-\tau_1(1/5-x)$,
$\tau_{21}(x)=\tau_2^{-1}(\tau_1(x))$.
We illustrate the statements with a number of pictures depicted in Figs.\ref{fig:neg_tau1_tau2}-\ref{fig:neg_approximations20}.

\begin{figure}[h]
\begin{minipage}[t]{0.45\linewidth}
\centering
\includegraphics[width=\textwidth]{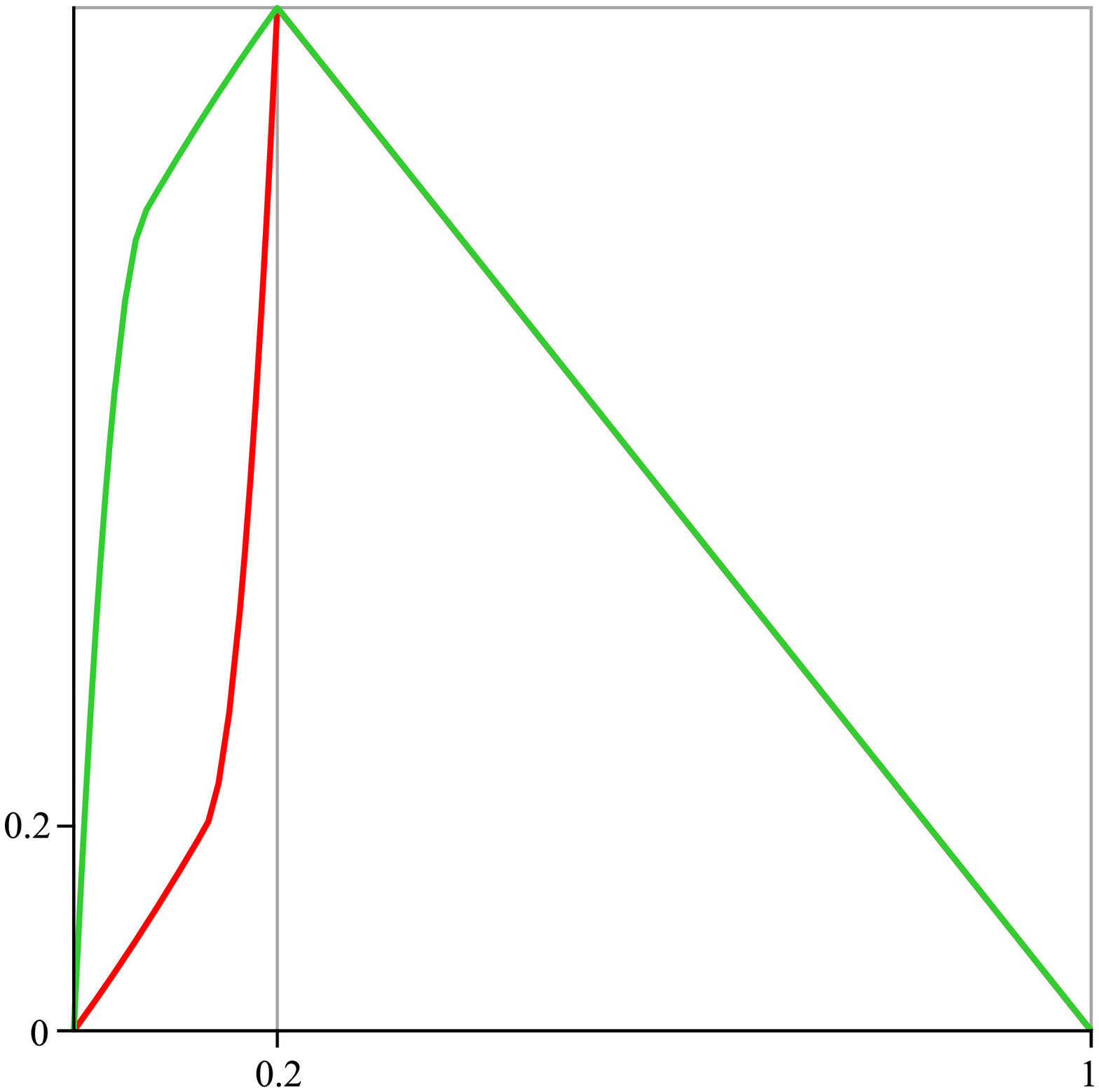}
\caption{Maps $\tau_1$ and $\tau_2$ for Example \ref{neg_ex}.}
\label{fig:neg_tau1_tau2}
\end{minipage}
\hspace{0.5cm}
\begin{minipage}[t]{0.45\linewidth}
\centering
\includegraphics[width=\textwidth]{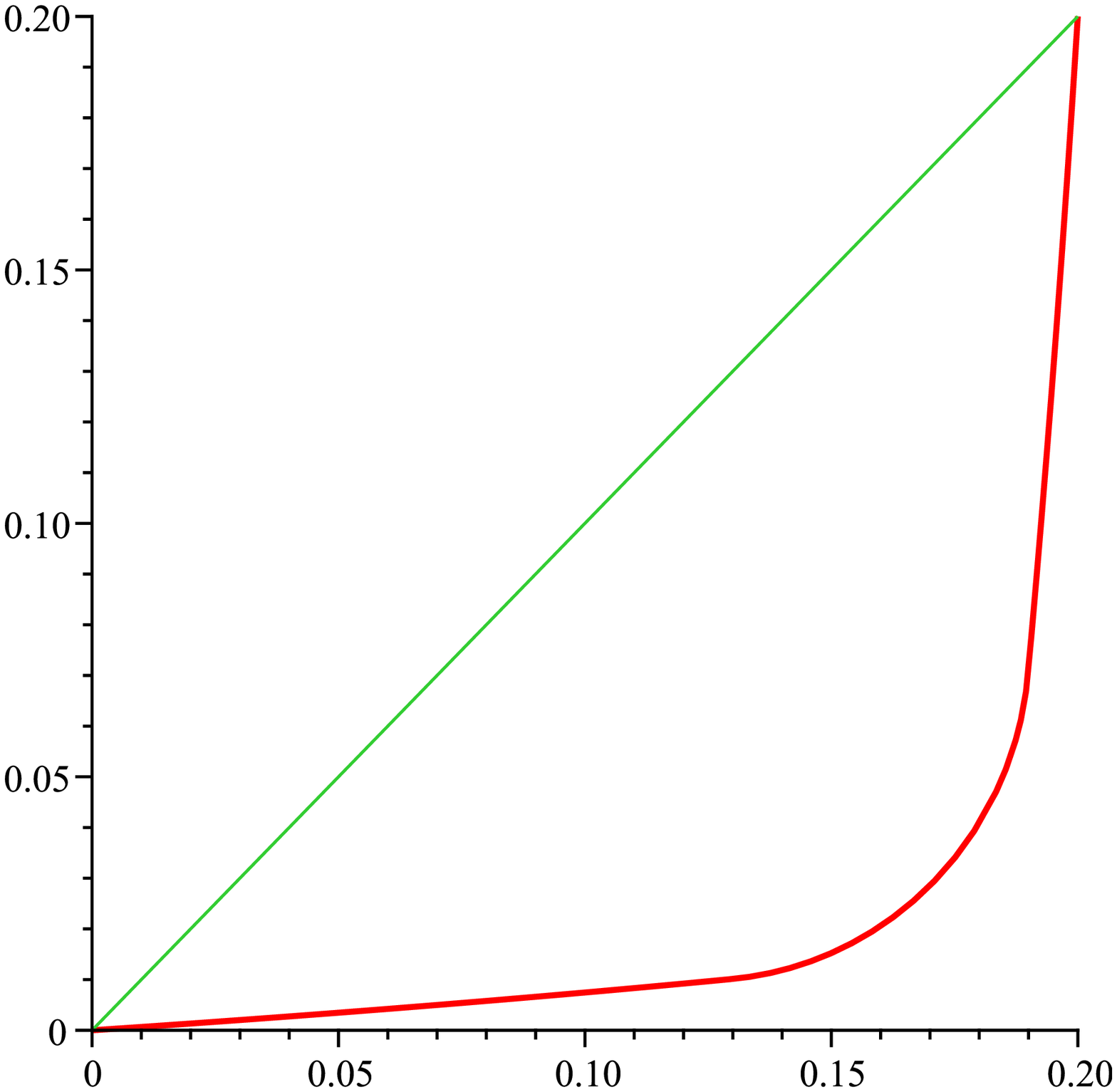}
\caption{Map $\tau_{21}$ for Example \ref{neg_ex}.}
\label{fig:neg_tau21}
\end{minipage}
\end{figure}

\begin{figure}[h]
\begin{minipage}[t]{0.45\linewidth}
\centering
\includegraphics[width=\textwidth]{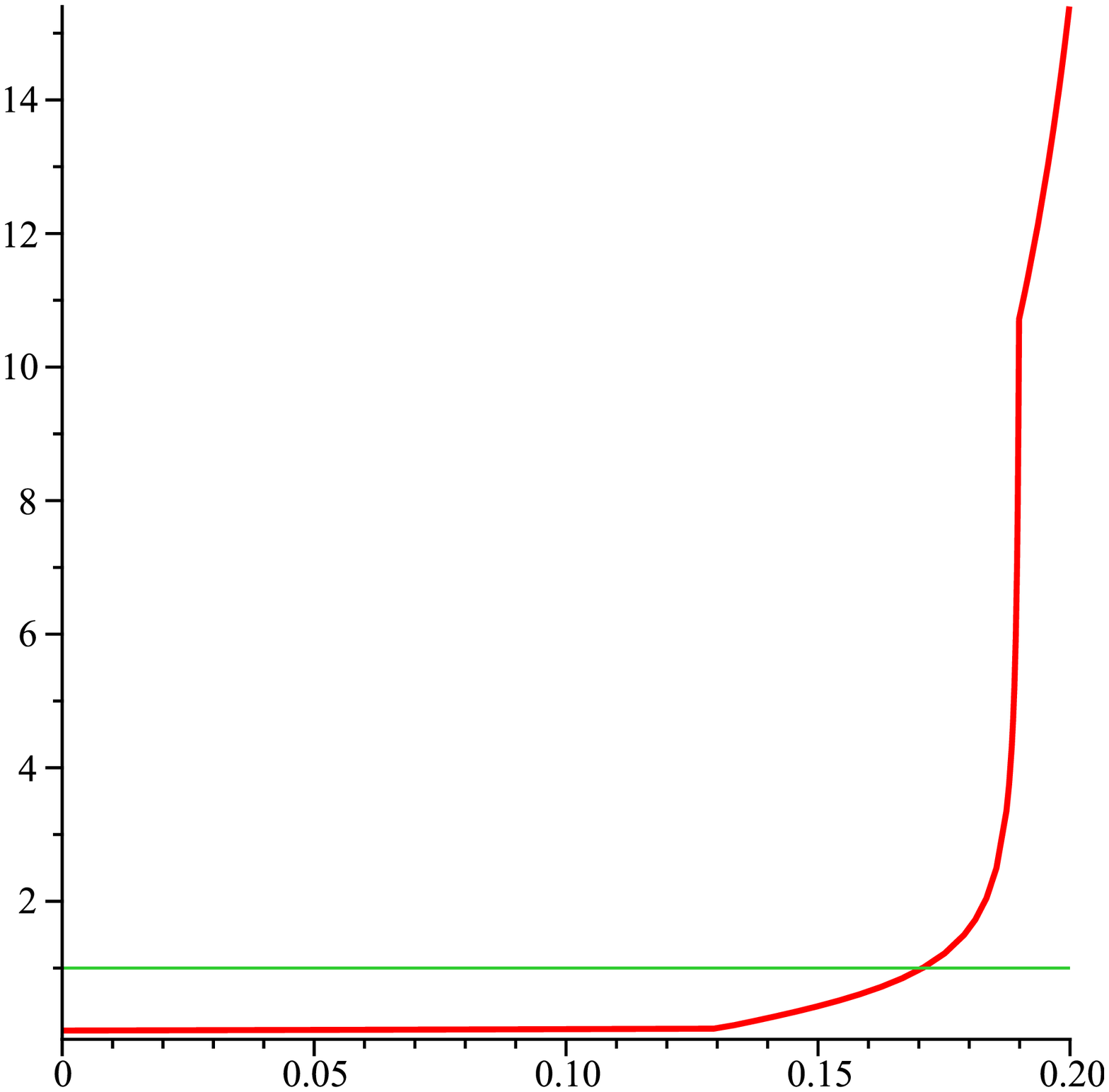}
\caption{Derivative $\tau_{21}'$ for Example \ref{neg_ex}.}
\label{fig:neg_tau21prime}
\end{minipage}
\hspace{0.5cm}
\begin{minipage}[t]{0.45\linewidth}
\centering
\includegraphics[width=\textwidth]{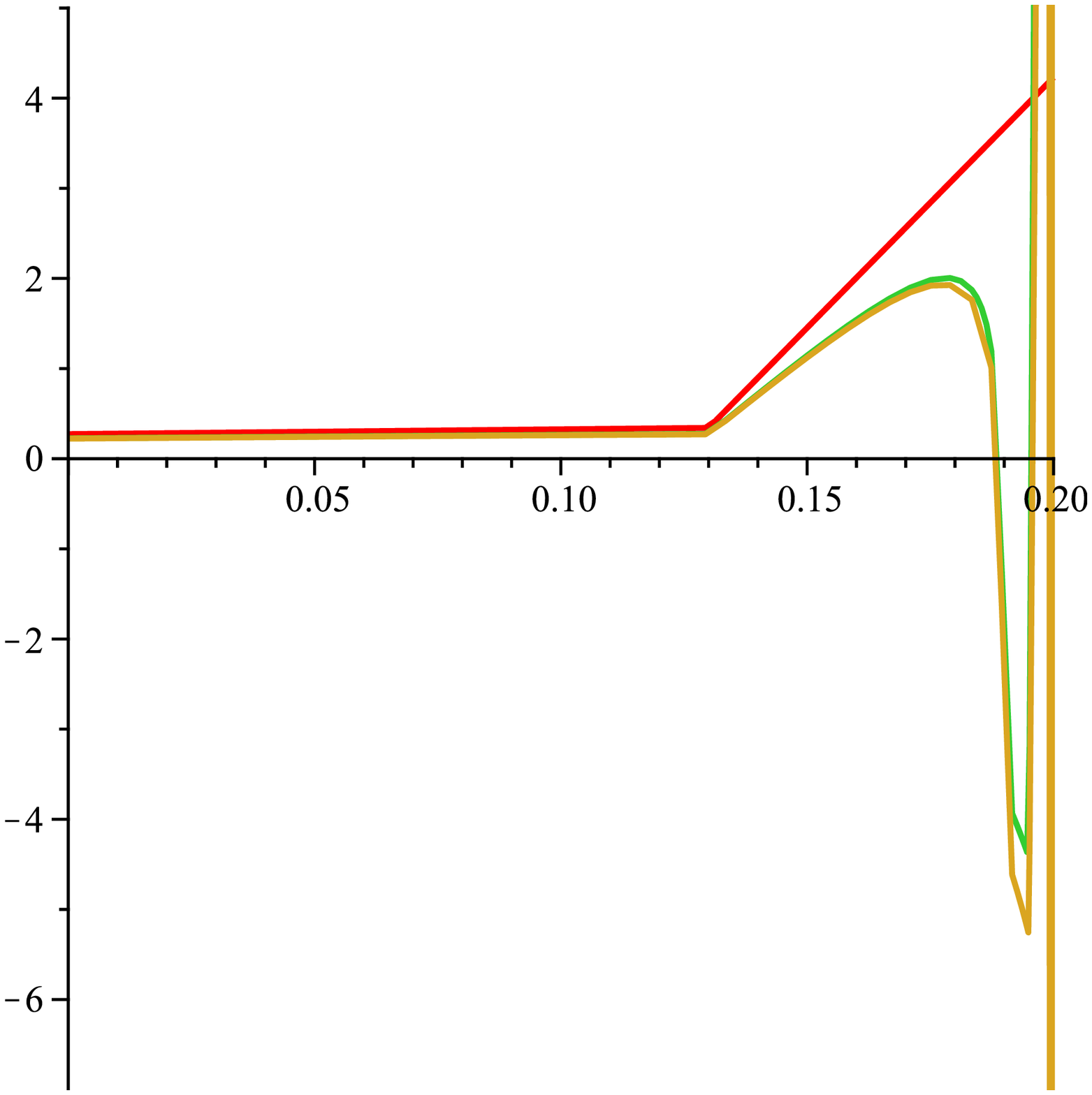}
  \caption{First three iterations of approximating the invariant solution $p$ on $[0,0.2]$, cutting off between -7 and 5 for Example \ref{neg_ex}.}
  \label{fig:neg_approximations20}
\end{minipage}
\end{figure}
\end{example}

\subsection{General density functions}
We now consider the case when the selector $\tau$ has a general pdf.
Let us consider two maps $\tau_1:[0,a]\to[0,1]$ increasing with $\tau_1(0)=0 $, $\tau_1(a)=1 $ satisfying $\tau_1(x)\le x/a$,
and $\tau_{21}:[0,a]\to[0,a]$ increasing with $\tau_{21}(0)=0 $, $\tau_{21}(a)=a $ satisfying $\tau_{21}(x)< x$ on open interval $(0,a)$.
Both $\tau_i$ are extended onto $[0,1]$ by defining them on $(a,1]$ as a monotonic map $\tau:(a,1]\to[0,1]$ which is onto and expanding (i.e. $|\tau'|>1$).

Let $f$ be an invariant density of a selector whose graph is between the graphs of $\tau_1$ and $\tau_2$. We are looking for a probability $p(x)$ such that the random map
$R=\{\tau_1,\tau_2;p, 1-p\}$ preserves the density $f$. The corresponding Frobenius-Perron equation is
\begin{equation}\label{FP_eq11}
 f(x)= \frac{p(\phi_1(x))f(\phi_1(x))}{\tau_1'(\phi_1(x))}+ \frac{(1-p(\psi_1(x)))f(\psi_1(x))}{\tau_2'(\psi_1(x))}+
       \frac{f(\tau^{-1}(x))}{|\tau'(\tau^{-1}(x))|}\ ,
\end{equation}
where $\phi_1=(\tau_1|_{_{[0,a]}})^{-1}$, $\psi_1=(\tau_2|_{_{[0,a]}})^{-1}$, $x\in[0,1]$.
Introducing  $g=p\cdot f$, substituting $x=\tau_1(y)$, $y\in[0,a]$ and using the equality $$\tau_{21}'(y)=\frac {\tau_1'(y)}{\tau_2'(\tau_{21}(y))}\ ,$$
we reduce eq.(\ref{FP_eq11})  to
\begin{equation}\label{main_eq1}
g(y)=g(\tau_{21}(y))\cdot\tau'_{21}(y)+ \left[\left((f(\tau_1(y))-\frac{f(\tau^{-1}(\tau_1(y)))}{|\tau'(\tau^{-1}(\tau_1(y)))|}\right)\cdot\tau_1'(y)-\tau_{21}'(y)\right]\ .
\end{equation}
We want to solve this equation
 hoping that $p$ will satisfy $0<p<1$.

We choose $\sigma>0$ such that $$\tau_{21}'(x)\le \sigma<1\ \ ,\ \ \text{on interval }\ \ [0,\widehat{a}]\ ,\ \widehat{a}<a\ .$$
Let us define the affine operator
$$(\mathcal{P}g)(x)=g(\tau_{21}(y))\cdot\tau'_{21}(y)+ \left[\left((f(\tau_1(y))-\frac{f(\tau^{-1}(\tau_1(y)))}{|\tau'(\tau^{-1}(\tau_1(y)))|}\right)\cdot\tau_1'(y)-\tau_{21}'(y)\right]\ .$$
The most natural space to consider it on is the space $L^\infty[0,a] $ of bounded functions on the interval $[0,a]$.

First, we will consider $\mathcal{P}$ on the smaller space $L^\infty[0,\widehat{a}] $. The proofs of Proposition \ref{prop_gp1} and Corollary \ref{prop_gpc1} are repetition
of those for Propositions \ref{Prop_p1}-\ref{Prop_p4}.



\begin{proposition}\label{prop_gp1}
$\mathcal{P}$ is a contraction on $L^\infty[0,\widehat{a}] $. Its unique fixed point is given by
\begin{equation}\label{sol1}
g(x)=\sum_{n=0}^\infty B(\tau_{21}^n(x))\cdot (\tau_{21}^n)'(x) \ ,
\end{equation}
where $B(y)=\left(f(\tau_1(y))-\frac{f(\tau^{-1}(\tau_1(y)))}{|\tau'(\tau^{-1}(\tau_1(y)))|}\right)\cdot\tau_1'(y)-\tau_{21}'(y) $.
\end{proposition}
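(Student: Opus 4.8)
The plan is to follow verbatim the template of Propositions \ref{Prop_p1}--\ref{Prop_p4}, since the only change from the Lebesgue case treated there is that the inhomogeneous term $B$ now carries the density $f$. First I would record that $\mathcal{P}$ maps $L^\infty[0,\widehat{a}]$ into itself: because $\tau_{21}$ is increasing with $\tau_{21}(x)<x$ on $(0,a)$, we have $\tau_{21}([0,\widehat{a}])\subseteq[0,\widehat{a})$, so $g\circ\tau_{21}$ is well defined and bounded whenever $g$ is, and $B$ is bounded on $[0,\widehat{a}]$ provided $f$ is bounded there and $\tau$ is expanding (so that $1/|\tau'|$ is bounded). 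The contraction estimate is then identical to that of Proposition \ref{Prop_p1}: since $\mathcal{P}$ is affine, $\mathcal{P}g-\mathcal{P}h$ equals its linear part applied to $g-h$, namely $(g-h)(\tau_{21}(\cdot))\,\tau_{21}'(\cdot)$, whence
$$\|\mathcal{P}g-\mathcal{P}h\|_\infty\le\sup_{[0,\widehat{a}]}|\tau_{21}'|\cdot\|g-h\|_\infty\le\sigma\,\|g-h\|_\infty,$$
with $\sigma<1$ by the choice of $\widehat{a}$. Banach's contraction principle then yields a unique fixed point.

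To obtain the explicit formula (\ref{sol1}) I would compute the Picard iterates starting from $g_0\equiv 0$. Writing the operator schematically as $(\mathcal{P}g)(x)=g(\tau_{21}(x))\,\tau_{21}'(x)+B(x)$ and using the chain rule $(\tau_{21}^{\,n})'(x)=\prod_{j=0}^{n-1}\tau_{21}'(\tau_{21}^{\,j}(x))$, a routine induction shows
$$(\mathcal{P}^{\,n}0)(x)=\sum_{k=0}^{n-1}B(\tau_{21}^{\,k}(x))\,(\tau_{21}^{\,k})'(x).$$
Since the Banach iteration converges in $L^\infty[0,\widehat{a}]$ to the fixed point, letting $n\to\infty$ yields the series (\ref{sol1}). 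Equivalently, exactly as the authors do in the Lebesgue case, one may simply substitute (\ref{sol1}) into (\ref{main_eq1}) and watch the series telescope, which bypasses the iterates altogether.

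The one genuine point to verify---and what I expect to be the only real obstacle---is convergence of the series, i.e.\ that (\ref{sol1}) actually lands in $L^\infty[0,\widehat{a}]$ and defines the fixed point. This follows from the geometric bound $|B(\tau_{21}^{\,k}(x))\,(\tau_{21}^{\,k})'(x)|\le\|B\|_\infty\,\sigma^{\,k}$ (each of the $k$ factors in $(\tau_{21}^{\,k})'$ is at most $\sigma$, since $\tau_{21}^{\,j}(x)$ stays in $[0,\widehat{a}]$), so the series is dominated by $\|B\|_\infty\sum_{k}\sigma^{\,k}$ and converges uniformly on $[0,\widehat{a}]$. The hidden hypothesis making this work is the boundedness of $B$, which for a general density $f$ rests on $f\in L^\infty$ near the relevant image intervals together with the expansion bound $|\tau'|>1$; this is precisely where the present case differs from the Lebesgue case and should be flagged. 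As in the remarks on the series (\ref{sol}), I would not expect uniform integrability or $L^1$-convergence of the partial sums on the full interval $[0,a)$, so the $L^\infty$ framework on $[0,\widehat{a}]$ is the right setting, with the extension to $[0,a)$ handled separately, exactly as in Proposition \ref{Prop_p4}.
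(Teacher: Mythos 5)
Your proposal is correct and follows essentially the same route as the paper, which itself states only that the proof is a repetition of Propositions \ref{Prop_p1}--\ref{Prop_p4}: the contraction estimate via $\sup|\tau_{21}'|\le\sigma<1$, Banach's fixed point theorem, and the series obtained as the limit of the Picard iterates (or by direct substitution into the functional equation). Your explicit flagging of the boundedness of $B$ (hence of $f$) as the hypothesis needed for the argument to carry over from the Lebesgue case is a useful point that the paper leaves implicit.
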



\begin{corollary}\label{prop_gpc1}
There exists a unique function $p(x)$ such that eq.(\ref{FP_eq11}) holds. On $[0,\widehat{a})$, $p(x)$ is
given by
\begin{equation}\label{sol2}
p(x)=\frac 1{f(x)}\sum_{n=0}^\infty B(\tau_{21}^n(x))\cdot (\tau_{21}^n)'(x)
\end{equation}
and it can be extended to $[0,a]$ as described in Proposition 4.
\end{corollary}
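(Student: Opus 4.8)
The plan is to reduce the corollary to the already-proved Proposition \ref{prop_gp1} through the substitution $g=p\cdot f$ that was used to pass from the Frobenius--Perron equation (\ref{FP_eq11}) to the functional equation (\ref{main_eq1}). For the fixed given density $f$, this substitution sets up a one-to-one correspondence between functions $p$ solving (\ref{FP_eq11}) and functions $g$ solving (\ref{main_eq1}): starting from a solution $p$ of (\ref{FP_eq11}), the change of variable $x=\tau_1(y)$ together with the chain-rule identity $\tau_{21}'(y)=\tau_1'(y)/\tau_2'(\tau_{21}(y))$ produces $g=pf$ satisfying (\ref{main_eq1}); conversely, every $g$ solving (\ref{main_eq1}) yields $p=g/f$ solving (\ref{FP_eq11}), since each of these steps (the algebraic substitution and the invertible change of variable on the relevant subinterval) is reversible. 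Hence existence and uniqueness of $p$ is equivalent to existence and uniqueness of $g$.

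Second, I would invoke Proposition \ref{prop_gp1} directly. Since $\mathcal{P}$ is a contraction on $L^\infty[0,\widehat{a}]$, Banach's fixed point theorem gives a unique fixed point $g$, and the fixed-point identity $g=\mathcal{P}g$ is exactly equation (\ref{main_eq1}). The explicit series supplied by that proposition, namely $g(x)=\sum_{n=0}^\infty B(\tau_{21}^n(x))(\tau_{21}^n)'(x)$, then yields formula (\ref{sol2}) at once upon dividing by $f(x)$, giving $p(x)=\frac{1}{f(x)}\sum_{n=0}^\infty B(\tau_{21}^n(x))(\tau_{21}^n)'(x)$ on $[0,\widehat{a})$.

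Third, for the extension to $[0,a]$ I would replay the argument of Proposition \ref{Prop_p4} verbatim, phrased in terms of $g$. Because $\tau_{21}(x)<x$ on $(0,a)$ and $\tau_{21}$ has fixed points $0$ and $a$, the inverse iterates $\tau_{21}^{-n}(\widehat{a})$ form a strictly increasing sequence converging to $a$. Equation (\ref{main_eq1}) expresses $g(y)$ in terms of $g(\tau_{21}(y))$ with $\tau_{21}(y)<y$, so $g$ is determined successively on $[\widehat{a},\tau_{21}^{-1}(\widehat{a}))$, then on $[\tau_{21}^{-1}(\widehat{a}),\tau_{21}^{-2}(\widehat{a}))$, and so on, exhausting $[0,a)$; the value at the endpoint $a$ is read off from $g(\tau_{21}(a))=g(a)$ and (\ref{main_eq1}). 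Dividing by $f$ transfers this uniquely determined extension to $p$, and the series description (\ref{sol2}) persists on $[0,a)$.

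The main obstacle I anticipate is not an analytic difficulty but a matter of well-definedness: the formula $p=g/f$ requires $f$ to be bounded away from zero on the relevant interval, so I would record that the corollary implicitly assumes $f>0$ there (otherwise $p$ is determined only on the support of $f$). I would also stress that, unlike the Lebesgue case treated in Propositions \ref{Prop_p1}--\ref{Prop_p4}, nothing in this argument forces $0\le p\le 1$; the corollary asserts only that a unique $p$ solving the Frobenius--Perron equation exists and is given by (\ref{sol2}), while whether that $p$ is a genuine probability must be verified case by case, as Example \ref{neg_ex} shows it can fail.
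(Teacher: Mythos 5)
Your proposal is correct and follows essentially the same route as the paper, which explicitly states that the proofs of Proposition \ref{prop_gp1} and Corollary \ref{prop_gpc1} are repetitions of those of Propositions \ref{Prop_p1}--\ref{Prop_p4}: the substitution $g=pf$ reduces everything to the contraction argument and series formula already established, and the extension to $[0,a]$ replays Proposition \ref{Prop_p4}. Your added caveats (that $f$ must be positive for $p=g/f$ to be well defined, and that $0\le p\le 1$ is not guaranteed) are consistent with the paper's standing assumptions and with Example \ref{neg_ex}.
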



\section{Singular + Singular =Lebesgue }

In this Section we use the results of Example \ref{quadratic} to construct maps $\tau_1$ and $\tau_2$ that have no acims and
a probability function $p(x)$ such that the random map $R=\left\{\tau_1, \tau_2; p, 1-p\right\}$ preserves the Lebesgue measure.

Let us consider maps $\tau_i:[0,1/2]\to[0,1]$, $i=1,2$, increasing, with $\tau_i(0)=0 $, $\tau_i(1/2)=1 $ satisfying $\tau_1(x)\le 2x$, $\tau_2(x)\ge 2x$
and the map $\tau_{21}=\tau_2^{-1}\circ\tau_1:[0,1/2]\to[0,1/2]$ increasing with $\tau_{21}(0)=0 $, $\tau_{21}(1/2)=1/2 $ satisfying $\tau_{21}(x)< x$ on open interval $(0,1/2)$.

We consider the special case of quadratic $\tau_1$ and $\tau_2$.
Let $$\tau_1(x)=a_1x^2+b_1x\ \ ,\ \ \tau_2(x)=a_2x^2+b_2x\ ,$$
where $a_i=4(1-b_i/2)$, $i=1,2$, $0<b_1<2<b_2<4$. We have
$$\tau_{21}(x)=\frac 1{2(1-b_2/2)}\cdot \left(-b_2/4+\sqrt{b_2^2/16+4(1-b_2/2)[(1-b_1/2)x^2+b_1x/4]}\right)\ .$$

\begin{figure}[htp]
\begin{minipage}[t]{0.45\linewidth}
\centering
\includegraphics[width=\textwidth]{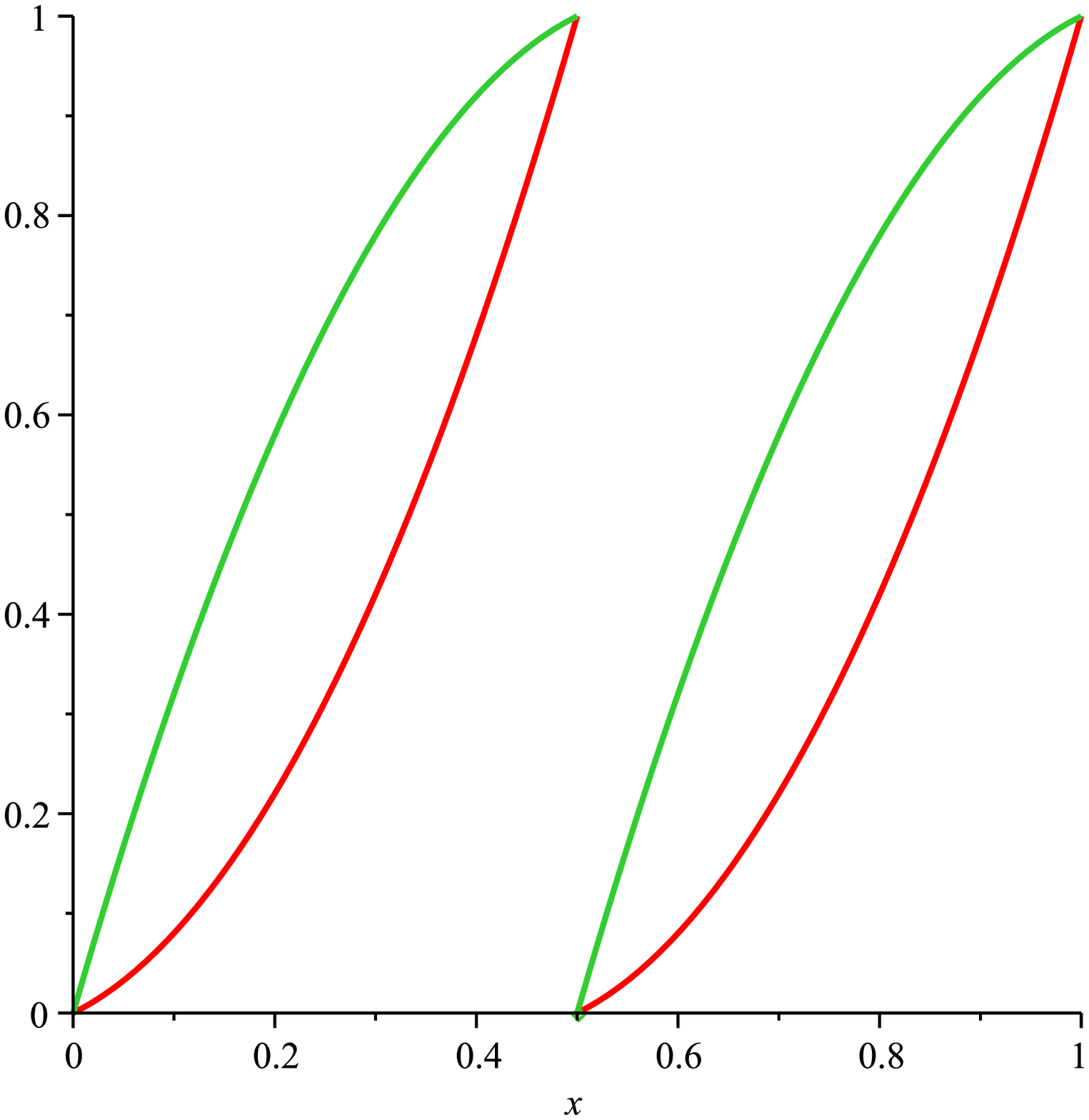}
\caption{Maps $\tau_1$ and $\tau_2$ for $b_1=0.5$, $b_2=3.5$.}
  \label{fig:singular_maps}
\end{minipage}
\hspace{0.5cm}
\begin{minipage}[t]{0.45\linewidth}
\centering
\includegraphics[width=\textwidth]{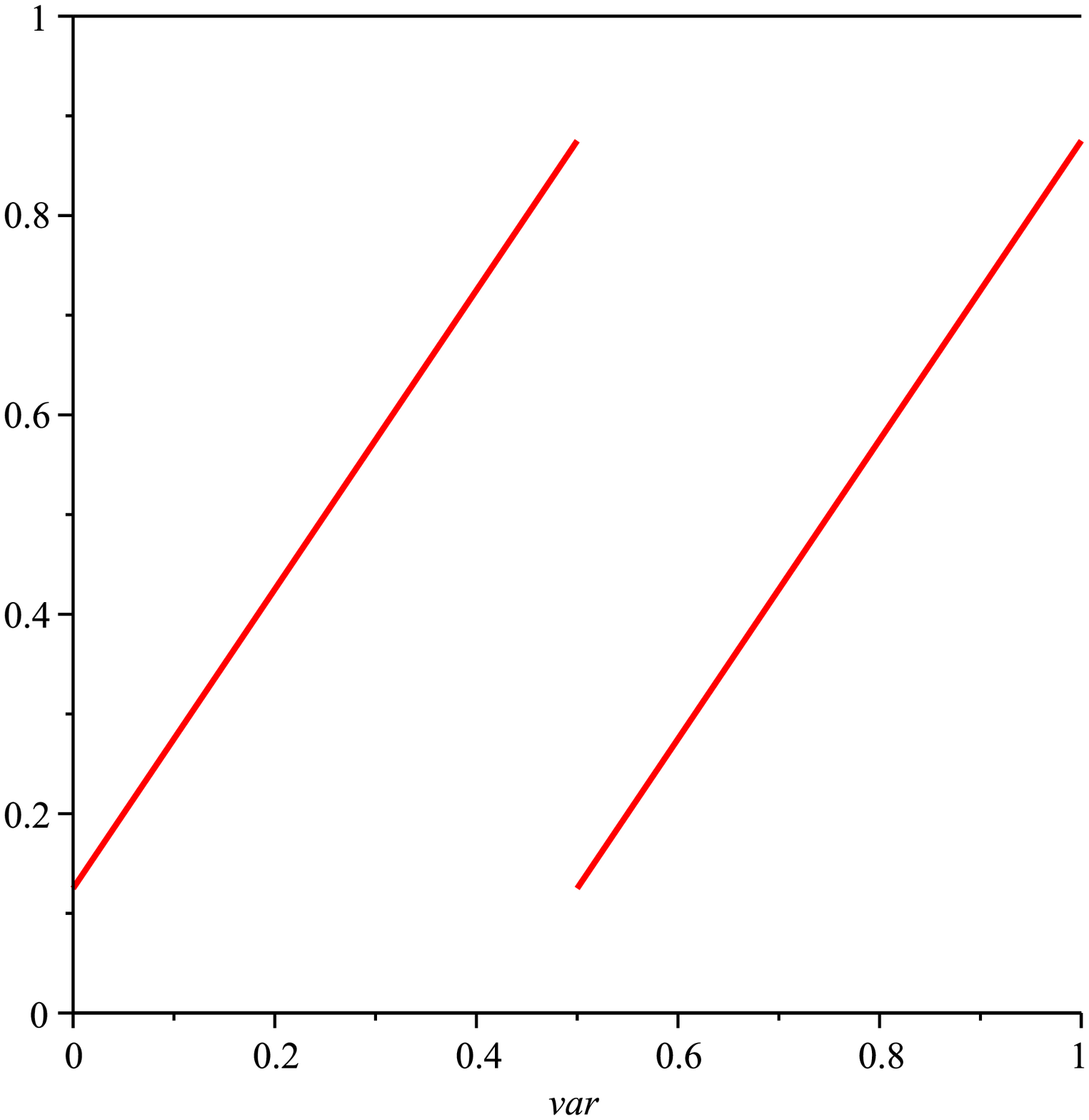}
  \caption{Probability $p(x)$.}
  \label{fig:singular_probability}
\end{minipage}
\end{figure}

We extend the maps $\tau_1$ and $\tau_2$ to the interval $[0,1]$ as follows
$$ \tau_i(x)=\begin{cases} \tau_i(x),& \ \text{for}\ 0\le x\le 1/2\ ;\\
                            \tau_i(x-1/2),& \ \text{for}\ 1/2< x\le 1\ ,
\end{cases}
$$
$i=1,2$.
We want to find a probability $p(x)$ such that the random map $R=\{\tau_1,\tau_2;p, 1-p\}$ preserves Lebesgue measure.
The corresponding Frobenius-Perron equation is
\begin{equation}\label{FP_eq}
 1= \frac{p(\phi_1(x))}{\tau_1'(\phi_1(x))}+ \frac{1-p(\psi_1(x))}{\tau_2'(\psi_1(x))}+
       \frac{p(\phi_1(x)+1/2)}{\tau_1'(\phi_1(x))}+ \frac{1-p(\psi_1(x)+1/2)}{\tau_2'(\psi_1(x))}\ ,
\end{equation}
where $\phi_1=(\tau_1|_{_{[0,1/2]}})^{-1}$, $\psi_1=(\tau_2|_{_{[0,1/2]}})^{-1}$, $x\in[0,1]$.

We will look for a solution $p$ such that $p(x+1/2)=p(x)$ for $x\in[0,1/2]$.
Then the equation (\ref{FP_eq}) reduces to
\begin{equation}\label{FP_eq1}
\frac 12= \frac{p(\phi_1(x))}{\tau_1'(\phi_1(x))}+ \frac{1-p(\psi_1(x))}{\tau_2'(\psi_1(x))}\ ,
\end{equation}
$x\in[0,1]$. Substituting $x=\tau_1(y)$, $y\in[0,1/2]$ and using the equality $$\tau_{21}'(y)=\frac {\tau_1'(y)}{\tau_2'(\tau_{21}(y))}\ ,$$
we reduce it further to the known equation

\begin{equation}\label{main_eq}
p(y)=p(\tau_{21}(y))\cdot\tau'_{21}(y)-\left[\tau_{21}'(y)-(1/2)\cdot\tau_1'(y)\right]\ .
\end{equation}

Using Maple 13 we previously found that the linear function $p(x)=A+Bx$ satisfies (\ref{main_eq}) if we put
$$ A=\frac{b_1(1-b_2/2)}{b_1-b_2}\ \  ,\ \ B=\frac{8(1-b_1/2)(1-b_2/2)}{(b_1-b_2)}\ .$$
Note, that both coefficients are positive as $b_2>b_1$, $b_1/2<1$ and $b_2/2>1$.

For $b_1=0.5$, $b_2=3.5$ neither map $\tau_1$ nor $\tau_2$ has an acim. Almost all trajectories converge to $0$ or $1$ correspondingly. However, the random map $R=\left\{\tau_1, \tau_2; p, 1-p\right\}$, where
$$p(x)=\begin{cases} (3/2)x+1/8,& \ \text{for}\ 0\le x\le 1/2\ ;\\
                            (3/2)(x-1/2)+1/8,& \ \text{for}\ 1/2< x\le 1\ ,
\end{cases}
$$
does preserve Lebesgue measure.

\section{Invariant density of any random map is also an invariant density of a selector}\label{opt_bang}

Let $\tau _{1}$ and $\tau _{2},$ lower and upper maps, respectively, be in $\mathcal{T}$ and possess acims with densities $f_{1}$ and $f_{2}.$  It is shown in \cite{GB03} that $\mathcal{D}$, the set of densities invariant under the
position dependent random maps $R=\left\{\tau _{1},\tau _{2};\ p,1-p\right\},$
is a convex set. Since the underlying space $I=[0,1]$ is compact, the space
of probability measures on $I$ is weakly compact \cite[Theorem 6.4]{P68}. From
Theorem 6 of \cite{GB03} it follows that $\mathcal{D}$ is weakly closed and hence
weakly compact in $L^1$.

Let $BV(I)$ be the space of functions of bounded variation on $I$.
It is proved in \cite{WP05} that if  $\tau _{1}$ and $\tau _{2}$ satisfy the following two conditions
\begin{align*}
\begin{split}
&(A)\, \sum_{k=1}^2 g_k(x)<\alpha<1, \text{ for some } \alpha,\\
&(B)\, g_k\in BV(I), k=1,2,
\end{split}
\end{align*}
where $g_k(x)=\frac{p_k(x)}{|\tau_k'(x)|}$, $k=1,2$,
then the random map $R=\left\{\tau _{1},\tau _{2};\ p,1-p\right\}$ has an invariant density of bounded variation. Since we are considering piecewise expanding maps,
\[\sum_{k=1}^2 g_k(x)<\sum_{k=1}^2 p_k(x)=1.\]
This implies that condition $(A)$ is always satisfied. If we collect all possible probability functions $p$ such that the above condition $(B)$ is satisfied uniformly, then the corresponding set of invariant densities, $\mathcal{D}_{AB}$, is a bounded set in $BV(I)$. Thus, the $L^1$ compactness of $\mathcal{D}_{AB}$ is established.

Consider the convex hull of $f_1$ and $f_2$, denoted by $\mathcal{D}_{Co}=\left\{\alpha f_1+(1-\alpha) f_2|0\leq\alpha\leq 1\right\}$. It follows from Corollary \ref{coro_cov} that we can choose $p$ such that $R=\left\{\tau _{1},\tau _{2};\ p,1-p\right\}$ preserves any $f\in\mathcal{D}_{Co}$. $\mathcal{D}_{Co}$ is closed and compact, has two extreme points,
 $f_1$ and $f_2$, and is a ``line segment" connecting $f_1$ and $f_2$.

We define

$\mathcal{D}_{S}=\left\{f|f \text{ is an invariant pdf of some selector } \boldsymbol\tau\in\mathcal{T}\right\}.$

$\mathcal{D}_{RM}=\left\{f|f \text{ is an invariant pdf of some random map } R=\left\{\tau_{1},\tau_{2};\ p,1-p\right\}\right\}.$

From Examples \ref{bendedfivexmod1} and \ref{neg_ex}, we know
\[\mathcal{D}_{S} \nsubseteq \mathcal{D}_{RM}.\]
By Theorem \ref{Previous_selector_result}, we have
\[\mathcal{D}_{Co}\subset \mathcal{D}_{S}.\]
From Corollary \ref{coro_cov}, it follows that
\[\mathcal{D}_{Co}\subset \mathcal{D}_{RM}.\]

We will now prove that $\mathcal{D}_{RM}\subset\mathcal{D}_{S}$, assuming that we consider only random maps whose pdf's are positive a.e. on $[0,1]$.

\begin{theorem}\label{Th_rand_sel}
Let us consider two maps $\tau_k:[0,1]\to[0,1]$, $k=1,2$, having the same monotonicity on the intervals of monotonicity partition, piecewise expanding (we assume slope $> 2$, or harmonic average of slopes condition \cite{GLBP12}). Then $\mathcal{D}_{RM}\subset\mathcal{D}_{S}$ if we consider only random maps whose pdf's are positive a.e. on $[0,1]$.
\end{theorem}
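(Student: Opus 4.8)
The plan is to construct the required selector by \emph{monotone rearrangement} (one-dimensional optimal transport) of the invariant measure of $R$, carried out separately on each interval of common monotonicity. Write $\mu=f\,dx$ for the $R$-invariant measure, so that invariance of $\mu$ under the random map reads, for every Borel set $A$,
\[
\mu(A)=\int_0^1\Big[p(t)\,\mathbf{1}_A(\tau_1(t))+(1-p(t))\,\mathbf{1}_A(\tau_2(t))\Big]\,d\mu(t).
\]
My goal is to produce a piecewise monotonic $\boldsymbol\tau$ with $\tau_1\le\boldsymbol\tau\le\tau_2$ whose transfer operator fixes $f$, equivalently $(\boldsymbol\tau)_*\mu=\mu$, and which lies in $\mathcal{T}$; this gives $f\in\mathcal{D}_S$.

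On a branch interval $I_j=[a_{j-1},a_j]$ on which $\tau_1,\tau_2$ are, say, increasing, I would set $\nu_j=\mu|_{I_j}$, $m_j=\mu(I_j)$, and introduce the image measure
\[
\rho_j(B)=\int_{I_j}\Big[p(t)\,\mathbf{1}_B(\tau_1(t))+(1-p(t))\,\mathbf{1}_B(\tau_2(t))\Big]\,d\mu(t),
\]
of total mass $m_j$; it is absolutely continuous, being a pushforward of an absolutely continuous measure under the nonsingular maps $\tau_1,\tau_2$. With $N_j(x)=\nu_j([a_{j-1},x])$ and $G_j(y)=\rho_j([0,y])$, the increasing transport map is $\boldsymbol\tau_j=G_j^{-1}\circ N_j$, which by the inverse-transform construction satisfies $(\boldsymbol\tau_j)_*\nu_j=\rho_j$. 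Assembling the branches into $\boldsymbol\tau$ and summing, $(\boldsymbol\tau)_*\mu=\sum_j(\boldsymbol\tau_j)_*\nu_j=\sum_j\rho_j$, and this last sum equals $\mu$ by the displayed $R$-invariance together with $p+(1-p)=1$. Thus $\mu$ is automatically $\boldsymbol\tau$-invariant, and the whole content of the theorem is reduced to showing that $\boldsymbol\tau$ is an admissible selector.

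The crux is the selector inequality $\tau_1\le\boldsymbol\tau_j\le\tau_2$ on $I_j$, and this is exactly where $\tau_1\le\tau_2$ and common monotonicity enter. Since $G_j$ is continuous, $\boldsymbol\tau_j(x)\ge\tau_1(x)$ is equivalent to $G_j(\tau_1(x))\le N_j(x)$, and
\[
G_j(\tau_1(x))=\int_{[a_{j-1},x]}p\,d\mu+\int_{\{\tau_2\le\tau_1(x)\}}(1-p)\,d\mu\le\int_{[a_{j-1},x]}d\mu=N_j(x),
\]
because $\{t:\tau_2(t)\le\tau_1(x)\}\subseteq\{t:\tau_1(t)\le\tau_1(x)\}=[a_{j-1},x]$ (as $\tau_1\le\tau_2$ and $\tau_1$ is increasing). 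Symmetrically, $\{t:\tau_1(t)\le\tau_2(x)\}\supseteq[a_{j-1},x]=\{t:\tau_2(t)\le\tau_2(x)\}$ yields $G_j(\tau_2(x))\ge N_j(x)$, hence $\boldsymbol\tau_j(x)\le\tau_2(x)$. The decreasing branches are treated identically after reversing the order (decreasing rearrangement).

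I expect the main obstacle to be regularity, namely membership $\boldsymbol\tau\in\mathcal{T}$ rather than the two inequalities above, which are elementary. Here the hypothesis that the $R$-invariant pdf is positive a.e.\ is essential: it forces $N_j$ to be strictly increasing, so $\boldsymbol\tau_j=G_j^{-1}\circ N_j$ is a genuine single-valued increasing function rather than a correspondence. Where $\rho_j$ assigns no mass, $G_j$ is flat and $\boldsymbol\tau_j$ acquires a jump at a single point; these are precisely the vertical segments, which, as in Theorem \ref{Previous_selector_result}, are removed, any value chosen in the gap keeping the graph inside $G$ by the two inequalities just established. What remains is piecewise $C^1$ smoothness of $\boldsymbol\tau_j=G_j^{-1}\circ N_j$, which I would obtain from the smoothness of $\tau_1,\tau_2$ together with the bounded-variation and positivity regularity of $f$ supplied by the expanding (slope $>2$, or harmonic-average-of-slopes) hypothesis; this is the only step that requires genuine care.
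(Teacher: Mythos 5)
Your construction is exactly the paper's: the monotone rearrangement $\boldsymbol\tau_j=G_j^{-1}\circ N_j$ coincides (after cancelling the common constant $\mathbf{F}(a_{j-1})$) with the map defined by $\mathbf{F}\bigl(\overline{\boldsymbol\tau_j^{-1}}(x)\bigr)=\int_0^{\overline{\tau_{1,j}^{-1}}(x)}pf+\int_0^{\overline{\tau_{2,j}^{-1}}(x)}(1-p)f$ in the paper's proof, your pushforward identity $\sum_j\rho_j=\mu$ is the paper's differentiated-and-summed Frobenius--Perron equation, and your set-inclusion argument for $\tau_1\le\boldsymbol\tau_j\le\tau_2$ is the paper's inequality $\mathbf{F}(\overline{\tau_{2,j}^{-1}})\le\mathbf{F}(\overline{\boldsymbol\tau_j^{-1}})\le\mathbf{F}(\overline{\tau_{1,j}^{-1}})$ read through the forward maps. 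The proposal is correct and takes essentially the same approach as the paper, with your explicit discussion of the role of $f>0$ a.e.\ and of the flat/jump points being a welcome clarification the paper leaves implicit.
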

\begin{proof}
We will use the notations introduced at the beginning of Section \ref{pre_sel}. Recall the definition of extended inverse function in the Section \ref{pre_sel}.  Assuming that $f$ is an invariant density of the random map $R=\left\{\tau_{1},\tau_{2};\ p,1-p\right\}$, the Frobenius-Perron equation for $R$ is
\begin{equation}\label{fp_rand}
f(x)=\sum\limits_{j=1}^m \frac{p(\tau_{1, j}^{-1}(x))f(\tau_{1, j}^{-1}(x))}{\mid\tau'_1(\tau_{1, j}^{-1}(x))\mid}\chi_{_{\tau_{1, j}(I_j)}}(x)+\frac{(1-p(\tau_{2, j}^{-1}(x)))f(\tau_{2, j}^{-1}(x))}{\mid\tau'_2(\tau_{2, j}^{-1}(x))\mid}\chi_{_{\tau_{2, j}(I_j)}}(x).
\end{equation}

Let $\mathbf{F}(x)=\int_0^x f(t)\, dt$, $x\in I$. Now construct a map $\boldsymbol\tau$, piecewise as follows:
\begin{equation}\label{fp_rand1}
\overline{\boldsymbol\tau_j^{-1}}(x)=\mathbf{F}^{-1}\left(\int_0^{\overline{\tau_{1, j}^{-1}}(x)} p(t)f(t)\, dt+\int_0^{\overline{\tau_{2, j}^{-1}}(x)} (1-p(t))f(t)\, dt\right).
\end{equation}
Note that $\boldsymbol\tau$ defined above has the same monotonicity as the boundary maps after the vertical segments are removed. Equation (\ref{fp_rand1}) is equivalent to \[\mathbf{F}\left(\overline{\boldsymbol\tau_j^{-1}}(x)\right)=\int_0^{\overline{\tau_{1, j}^{-1}}(x)} p(t)f(t)\, dt+\int_0^{\overline{\tau_{2, j}^{-1}}(x)} (1-p(t))f(t)\, dt\, .\]
Differentiating both sides of the above equation  with respect to $x$, we have
\begin{equation}
\frac{f(\boldsymbol\tau_{j}^{-1}(x))}{\mid\boldsymbol\tau'(\boldsymbol\tau_{j}^{-1}(x))\mid}\chi_{_{\boldsymbol\tau_{j}(I_j)}}(x)=\frac{p(\tau_{1, j}^{-1}(x))f(\tau_{1, j}^{-1}(x))}{\mid\tau'_1(\tau_{1, j}^{-1}(x))\mid}\chi_{_{\tau_{1, j}(I_j)}}(x)+\frac{(1-p(\tau_{2, j}^{-1}(x)))f(\tau_{2, j}^{-1}(x))}{\mid\tau'_2(\tau_{2, j}^{-1}(x))\mid}\chi_{_{\tau_{2, j}(I_j)}}(x).
\end{equation}
Now, from the definition of the Frobenius-Perron operator of $\boldsymbol\tau$, $P_{\boldsymbol\tau}$, it follows
\begin{align}
\begin{split}
\left(P_{\boldsymbol\tau}f\right)(x)&=\sum\limits_{j=1}^m \frac{f(\boldsymbol\tau_{j}^{-1}(x))}{\mid\boldsymbol\tau'(\boldsymbol\tau_{j}^{-1}(x))\mid}\chi_{_{\boldsymbol\tau_{j}(I_j)}}(x)\\
&=\sum\limits_{j=1}^m\frac{p(\tau_{1, j}^{-1}(x))f(\tau_{1, j}^{-1}(x))}{\mid\tau'_1(\tau_{1, j}^{-1}(x))\mid}\chi_{_{\tau_{1, j}(I_j)}}(x)+\frac{(1-p(\tau_{2, j}^{-1}(x)))f(\tau_{2, j}^{-1}(x))}{\mid\tau'_2(\tau_{2, j}^{-1}(x))\mid}\chi_{_{\tau_{2, j}(I_j)}}(x)\\
&=f(x),
\end{split}
\end{align}
which implies that $\boldsymbol\tau$ preserves $f$.

We have $\overline{\tau_{2, j}^{-1}}(x)\leq \overline{\tau_{1, j}^{-1}}(x)$, so we also have
\begin{align*}
\begin{split}
\mathbf{F}\left(\overline{\boldsymbol\tau_j^{-1}}(x)\right)&\leq \int_0^{\overline{\tau_{1, j}^{-1}}(x)} p(t)f(t)\, dt+\int_0^{\overline{\tau_{1, j}^{-1}}(x)} (1-p(t))f(t)\, dt\\
&=\int_0^{\overline{\tau_{1, j}^{-1}}(x)}f(t)\, dt=\mathbf{F}\left(\overline{\tau_{1, j}^{-1}}(x)\right),
\end{split}
\end{align*}
and
\begin{align*}
\begin{split}
\mathbf{F}\left(\overline{\boldsymbol\tau_j^{-1}}(x)\right)&\geq \int_0^{\overline{\tau_{2, j}^{-1}}(x)} p(t)f(t)\, dt+\int_0^{\overline{\tau_{2, j}^{-1}}(x)} (1-p(t))f(t)\, dt\\
&=\int_0^{\overline{\tau_{2, j}^{-1}}(x)}f(t)\, dt=\mathbf{F}\left(\overline{\tau_{2, j}^{-1}}(x)\right),
\end{split}
\end{align*}
therefore,
\[\mathbf{F}\left(\overline{\tau_{2, j}^{-1}}(x)\right)\leq\mathbf{F}\left(\overline{\boldsymbol\tau_j^{-1}}(x)\right)\leq\mathbf{F}\left(\overline{\tau_{1, j}^{-1}}(x)\right).\]
Then, it follows from the definition of $\mathbf{F}$ that
\[\tau_{1, j}(x)\leq\boldsymbol\tau_{j}(x)\leq\tau_{2, j}(x).\]
This implies that $\boldsymbol\tau$ is a selector. The proof is completed.
\end{proof}

\section{Continuous dependence of invariant densities on position dependent random maps}

The following continuity theorem will be used in the next section.

\begin{theorem}\label{conti_th}
Let us consider two maps $\tau_k:[0,1]\to[0,1]$, $k=1,2$, both piecewise expanding (we assume slope $> 2$, or harmonic average of slopes condition \cite{GLBP12}).
Let us consider a sequence of piecewise semi-Markov approximations $\tau_k^{(n)}$ on uniform partitions $\mathcal P_n$, $\tau_k^{(n)}\to \tau_k$ in $L^1$ as $n\to\infty$.
Let $R^{(n)}=\{\tau_1^{(n)},\tau_2^{(n)}; p_1^{(n)},p_2^{(n)}\}$ be a position dependent random map preserving density $f_{(n)}$. If $p_1^{(n)}\to p_1$ pointwise almost everywhere (actually even weak $L^\infty$ convergence suffices), then  $f_{(n)}\to f$,
where $f$ is invariant for $R=\{\tau_1,\tau_2; p_1,p_2\}$, $p_1+p_2=1$.
\end{theorem}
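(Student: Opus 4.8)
The plan is to treat this as a stability (perturbation) result for the Frobenius--Perron operators of the random maps, following the Lasota--Yorke together with Helly compactness scheme. Write $P_{R^{(n)}}$ and $P_R$ for the Frobenius--Perron operators of $R^{(n)}$ and $R$, so that $P_{R^{(n)}}f_{(n)}=f_{(n)}$ by hypothesis. The first step is to establish a \emph{uniform} Lasota--Yorke inequality
\[
\mathrm{Var}\big(P_{R^{(n)}}g\big)\le \alpha\,\mathrm{Var}(g)+C\,\|g\|_1 ,
\]
with constants $\alpha<1$ and $C$ independent of $n$. The hypothesis that each $\tau_k$, and hence for large $n$ each semi-Markov approximation $\tau_k^{(n)}$ on the uniform partition $\mathcal P_n$, has slope $>2$ (or satisfies the harmonic-average-of-slopes condition of \cite{GLBP12}) is exactly what produces such an inequality for the averaged operator $P_{R^{(n)}}$; the resulting constants depend only on the slope bounds and the number of branches, both stable under the approximation, so they may be taken uniform in $n$. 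Applying the inequality to $g=f_{(n)}$ gives $\mathrm{Var}(f_{(n)})\le C/(1-\alpha)$, a bound uniform in $n$, and then by Helly's selection theorem the family $\{f_{(n)}\}$ is precompact in $L^1[0,1]$; I extract a subsequence $f_{(n_k)}\to f^\ast$ in $L^1$ with $f^\ast\in BV$.

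The heart of the argument is to identify $f^\ast$ as an invariant density of $R$. Since $f_{(n_k)}=P_{R^{(n_k)}}f_{(n_k)}\to f^\ast$ in $L^1$, it suffices to show $P_{R^{(n_k)}}f_{(n_k)}\rightharpoonup P_R f^\ast$ weakly, whence $f^\ast=P_Rf^\ast$. Testing against an arbitrary continuous $\phi$ and changing variables (writing $p_2^{(n)}=1-p_1^{(n)}$) turns the pairing into
\[
\int_0^1 p_1^{(n_k)}(y)\,f_{(n_k)}(y)\,\phi\big(\tau_1^{(n_k)}(y)\big)\,dy+\int_0^1\big(1-p_1^{(n_k)}(y)\big)\,f_{(n_k)}(y)\,\phi\big(\tau_2^{(n_k)}(y)\big)\,dy .
\]
Here $f_{(n_k)}\to f^\ast$ in $L^1$, while $\phi\circ\tau_k^{(n_k)}\to\phi\circ\tau_k$ boundedly almost everywhere (from $\tau_k^{(n)}\to\tau_k$ in $L^1$, passing to an a.e.-convergent subsequence and using continuity of $\phi$), so the products $f_{(n_k)}\cdot(\phi\circ\tau_k^{(n_k)})$ converge strongly in $L^1$. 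Pairing a strongly convergent $L^1$ sequence against the weak-$*$ convergent $L^\infty$ sequence $p_1^{(n_k)}\to p_1$ makes each integral tend to $\int p_1 f^\ast(\phi\circ\tau_1)+\int(1-p_1)f^\ast(\phi\circ\tau_2)=\int (P_Rf^\ast)\phi$. This is precisely why only weak-$L^\infty$ convergence of the weights is needed: $p$ appears only ever multiplied by an $L^1$-convergent factor. Hence $P_Rf^\ast=f^\ast$.

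To upgrade subsequential convergence to convergence of the full sequence I would invoke uniqueness of the $BV$ invariant density of $R$ (again a consequence of the expanding hypothesis, in the class where the acim is unique): every subsequence of $\{f_{(n)}\}$ has a further subsequence converging in $L^1$ to an invariant density of $R$, and if that density is unique it must equal $f$, so $f_{(n)}\to f$. I expect the main obstacle to be the identification step, namely passing simultaneously to the limit in the maps $\tau_k^{(n)}\to\tau_k$ (which enter through both the inverse branches and the derivatives in the denominators of $P_{R^{(n)}}$) and in the weights $p_1^{(n)}\to p_1$. The weak formulation above is what dissolves this difficulty, recasting the troublesome denominators as a change of variables and isolating $p$ as a factor against a strongly convergent sequence; the piecewise-linear semi-Markov structure on the uniform partitions keeps the derivatives piecewise constant and makes the uniformity of the Lasota--Yorke constants in the first step transparent.
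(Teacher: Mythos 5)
Your proposal is correct, and its skeleton --- uniform Lasota--Yorke inequality, hence a uniform $BV$ bound on $f_{(n)}$, hence $L^1$-precompactness and extraction of a convergent subsequence --- is exactly the paper's first step. Where you genuinely diverge is in the identification of the limit as an invariant density of $R$. The paper works entirely in the $L^1$ norm: it introduces the intermediate random map $\widehat{R}^{(n)}=\{\tau_1,\tau_2;p_1^{(n)},p_2^{(n)}\}$ (limit maps, approximate weights) and splits $\|P_Rf-f\|_{L^1}$ into four terms, controlling the weight perturbation via $\|(p_k-p_k^{(n)})f\|_{L^1}$, the density perturbation via the $L^1$-contractivity of $P_{\widehat{R}^{(n)}}$, and the map perturbation $\|P_{\widehat{R}^{(n)}}f_{(n)}-P_{R^{(n)}}f_{(n)}\|_{L^1}$ by an appeal to ``stability'' under $\tau_k^{(n)}\to\tau_k$. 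You instead pass to the dual formulation, writing $\int (P_{R^{(n)}}f_{(n)})\phi=\int f_{(n)}\bigl[p_1^{(n)}\,\phi\circ\tau_1^{(n)}+(1-p_1^{(n)})\,\phi\circ\tau_2^{(n)}\bigr]$ and pairing the weak-$*$ convergent weights against strongly $L^1$-convergent products. This buys two things: it makes completely transparent why weak $L^\infty$ convergence of $p_1^{(n)}$ suffices (the paper only asserts this parenthetically, and its first triangle-inequality term as written uses strong convergence of the weights), and it dissolves the most delicate term of the paper's estimate --- the comparison of $P_{\tau_k^{(n)}}$ with $P_{\tau_k}$ in norm, which is not continuous in the map in general and really needs the uniform $BV$ bound on $f_{(n)}$, a point the paper leaves implicit --- by moving the dependence on the maps onto the test-function side, where $\phi\circ\tau_k^{(n)}\to\phi\circ\tau_k$ by bounded a.e. convergence. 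You also close the argument more carefully than the paper: the paper simply ``assumes the limit of $\{f_{(n)}\}$ is $f$,'' i.e.\ works along a subsequence, whereas you upgrade to convergence of the full sequence via uniqueness of the invariant density (with the honest caveat that this requires uniqueness, which neither the theorem statement nor the paper's hypotheses explicitly guarantee). Both arguments treat the uniformity of the Lasota--Yorke constants over the semi-Markov approximations at the same level of detail, namely by assertion.
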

\begin{proof}
It follows from the slope assumption that the Frobenius-Perron operators $P_{R^{(n)}}$ satisfy the
Lasota-Yorke inequality \cite{BG97,LaY} with uniform constant. And thus, $\left\{f_{(n)}\right\}$ is a precompact set in $L^1$.
We assume that the limit of $\left\{f_{(n)}\right\}$ is $f$.

Let us introduce one more sequence of random maps: $\widehat{R}^{(n)}=\left\{\tau_1,\tau_2; p_1^{(n)},p_2^{(n)}\right\}$,  their invariant densities
are denoted by $\widehat{f}_{(n)}$, $n\in \mathds{N}$.
\begin{align*}
\begin{split}
\parallel P_{R}f-f\parallel_{L^1}\leq&\parallel P_{R}f-P_{\widehat{R}^{(n)}}f\parallel_{L^1}
+\parallel P_{\widehat{R}^{(n)}}f-P_{R^{(\, ,n)}}f_{(n)}\parallel_{L^1}\\
&+
\parallel P_{\widehat{R}^{(n)}}f_{(n)}-P_{R^{(n)}}f_{(n)}\parallel_{L^1}+
\parallel P_{R^{(n)}}f_{(n)}-f\parallel_{L^1}\\
=&\parallel \sum_{k=1}^2P_{\tau_k}p_kf-\sum_{k=1}^2P_{\tau_k}p^{(n)}_kf\parallel_{L^1}+
\parallel P_{\widehat{R}^{(n)}}\left(f-f_{(n)}\right)\parallel_{L^1}\\
&+
\parallel P_{\widehat{R}^{(n)}}f_{(n)}-P_{R^{(n)}}f_{(n)}\parallel_{L^1}+
\parallel f_{(n)}-f\parallel_{L^1}\\
\leq&\sum_{k=1}^2\parallel (p_k-p^{(n)}_k)f\parallel_{L^1}+
\parallel f-f_{(n)}\parallel_{L^1}\\
&+
\parallel P_{\widehat{R}^{(n)}}f_{(n)}-P_{R^{(n)}}f_{(n)}\parallel_{L^1}+
\parallel f_{(n)}-f\parallel_{L^1}.
\end{split}
\end{align*}
The first summand is sufficiently small when $n$ large enough since $p^{(n)}_k\rightarrow p_k$.
The second and fourth terms approach 0 by the definition.
For the third term, note that $R^{(n)}$ and $\widehat{R}^{(n)}$ have the same probability
functions, we also have $\tau_k^{(n)}\to \tau_k$ as $n\to\infty$, it follows from the stability of $\widehat{R}^{(n)}$
that this term also approaches 0 as $n\to\infty$.
This completes the proof.
\end{proof}
\begin{remark}
We need the assumption that slope $> 2$ or harmonic average of slopes condition for $\tau_k$. For example,
$\tau_1^{(n)}=W_0$, $\tau_2^{(n)}=W_{a_n}$, where $W_0$ is the original $W-$shaped map and $W_{a_n}$ is a sequence of $W-$shaped maps of $W_0$ being perturbed in a neighborhood of $1/2$, call it $U_{(1/2)}$ (The detailed construction of $W_{a_n}$ can be found in \cite{LGBPE13}). We put $p_2^{(n)}=1$ in the set $U_{(1/2)}$ . Then as $R^{(n)}=\{W_0,W_{a_n}; p_1^{(n)},p_2^{(n)}\}\rightarrow R=\{W_0,W_0; p_1,p_2\}$, $f_{(n)}$ shall converge to a measure with singular component. However, $R$ preserves some acim.
\end{remark}

\section{Optimization and extreme points}

A well known formulation of the Krein-Milman theorem states that every linear
continuous functional on a locally convex Hausdorff linear space $E$ attains
its minimum on a compact subset $X$ of $E$ at some extreme point of $X$ \cite{B53}. This
result does not require compactness of $X$ and has been extended to lower
semi-continuous concave functionals in \cite{B58}.

In general we do not know the extreme points of $\mathcal{D}_{RM}$. However, in the special case
where $\tau _{1}$ and $\tau _{2}$ are semi-Markov piecewise linear, it has
been shown in \cite{GB06} that the extreme points of $\mathcal{D}_{RM}$ come from the
random maps where, for each $x\in I$, $p(x)\in \{0,1\},$ that is,
deterministic maps, taking their values on either the lower or upper
boundaries. We call such a random map a bang-bang map. In this setting, we
consider a continuous linear functional on $\mathcal{D}_{RM}$ which is to be
optimized. By the Krein-Milman theorem the random map that optimizes the
functional over the admissible probability density functions is a bang-bang
map.

A typical continuous linear functional that appears in optimization problems
is of the form:
\[
F(f)=\int_{0}^{1}g(x)f(x)dx,
\]
where $g$ is a fixed bounded function on $I$ and $f$ is a density function
in $\mathcal{D}_{RM}$.

Question: Can we generalize the above bang-bang result to more general maps?

From now on, let us consider the real space $L^1_I$ of Lebesgue integrable function on $I$. It is a locally convex, linear Hausdorff space. For a sequence of closed sets of
$L^1_I$, $\left\{\mathcal{S}_n\right\}$, we write $\limsup\mathcal{S}_n$ to refer to the limit superior in the sense of Kuratowski convergence \cite{K66}.
\begin{definition}
The point $\mathbf{s}$ belongs to $\limsup\mathcal{S}_n$, if every neighbourhood of $\mathbf{s}$ intersects an infinite number of the $\mathcal{S}_n$.
\end{definition}

We will also need the following theorem from \cite[Theorem 2]{J54}, which we have modified for our use.
\begin{theorem}\label{J_Th2}
Let $\left\{\mathcal{S}_n\right\}$ be a sequence of compact convex sets whose
union is contained in a compact convex subset of $L^1_I$. Let $\mathcal{X}_n$ be the set of
extreme points of $\mathcal{S}_n$ for each $n$ and let $\mathcal{S}=\limsup\mathcal{S}_n$ and $\mathcal{X}
=\limsup\mathcal{X}_n$. Then $\mathcal{S}\subset Co(\mathcal{X})$, where $Co(\mathcal{X})$ is the convex hull of
$\mathcal{X}$. If, in addition, $\mathcal{S}$ is convex, then $\mathcal{S}=Co(\mathcal{X})$,
that is, $\mathcal{X}$ contains all the extreme points of $\mathcal{S}$.
\end{theorem}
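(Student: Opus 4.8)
The plan is to realize each point of $\mathcal{S}$ as the barycenter of a probability measure carried by $\mathcal{X}$, by applying the Choquet representation inside each $\mathcal{S}_n$ and then passing to a weak-$*$ limit of the representing measures. First I would record two preliminary facts. Since $\bigcup_n\mathcal{S}_n$ lies in a fixed compact convex set $K\subset L^1_I$ and $L^1_I$ is separable, each $\mathcal{S}_n$ is a compact convex metrizable set, while both $\mathcal{S}=\limsup\mathcal{S}_n$ and $\mathcal{X}=\limsup\mathcal{X}_n$ are closed subsets of $K$ and hence compact. Moreover $\mathcal{X}_n\subseteq\mathcal{S}_n$ forces $\mathcal{X}\subseteq\mathcal{S}$, and since an open set meets $\mathcal{X}_n$ exactly when it meets $\overline{\mathcal{X}_n}$, we have $\limsup\overline{\mathcal{X}_n}=\mathcal{X}$.

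For the inclusion $\mathcal{S}\subseteq Co(\mathcal{X})$, fix $s\in\mathcal{S}$. By the definition of the Kuratowski $\limsup$ there is a subsequence, which I relabel as $n$, and points $s_n\in\mathcal{S}_n$ with $s_n\to s$. By Choquet's theorem each $s_n$ is the barycenter of a probability measure $\mu_n$ on $\mathcal{S}_n$ supported on $\overline{\mathcal{X}_n}$. All the $\mu_n$ are probability measures on the compact metric space $K$, so Prokhorov's theorem gives a weakly convergent subsequence $\mu_n\to\mu$. Testing against $\ell\in L^\infty_I=(L^1_I)^*$ yields $\ell(s_n)=\int\ell\,d\mu_n\to\int\ell\,d\mu$, so $s$ is the barycenter of $\mu$.

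The crux is to show that $\mu$ is concentrated on $\mathcal{X}$. Let $C$ be any compact subset of the open set $K\setminus\mathcal{X}$. Each $x\in C$ lies outside $\limsup\overline{\mathcal{X}_n}$, so some neighbourhood $U_x$ satisfies $U_x\cap\overline{\mathcal{X}_n}=\emptyset$ for all large $n$; covering $C$ by finitely many such $U_x$ produces an open set $U\supseteq C$ with $U\cap\overline{\mathcal{X}_n}=\emptyset$ for all large $n$, so that $\mu_n(U)=0$ eventually. The portmanteau inequality $\mu(U)\le\liminf_n\mu_n(U)$ then forces $\mu(C)\le\mu(U)=0$, and inner regularity of $\mu$ gives $\mu(K\setminus\mathcal{X})=0$. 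Hence $\mu$ is a probability measure on the compact set $\mathcal{X}$ and $s=\mathrm{bar}(\mu)\in Co(\mathcal{X})$, the closed convex hull being compact by Mazur's theorem. This establishes $\mathcal{S}\subseteq Co(\mathcal{X})$.

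Finally, if $\mathcal{S}$ is convex then $\mathcal{X}\subseteq\mathcal{S}$ together with the compactness of $\mathcal{S}$ gives $Co(\mathcal{X})\subseteq\mathcal{S}$, which combined with the previous inclusion yields $\mathcal{S}=Co(\mathcal{X})$; Milman's partial converse to the Krein--Milman theorem then shows that every extreme point of $\mathcal{S}=Co(\mathcal{X})$ lies in $\mathcal{X}$. I expect the main obstacle to be the support step—confining the weak-$*$ limit measure to $\mathcal{X}$—together with the attendant infinite-dimensional technicalities: verifying the hypotheses of Choquet's theorem, the weak continuity of the barycenter map into $L^1_I$, and the distinction between the convex hull and its closure.
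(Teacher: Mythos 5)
The paper does not actually prove this statement: it is quoted, in slightly modified form, from Jerison \cite{J54}, so the only comparison available is with Jerison's own argument. Your proof is correct but takes a genuinely different and considerably heavier route. Jerison argues by separation: if $s\in\mathcal{S}\setminus\overline{Co}(\mathcal{X})$, Hahn--Banach gives a continuous linear functional $\ell$ and a constant $c$ with $\ell(s)>c\ge\sup_{\mathcal{X}}\ell$; picking $s_{n_k}\in\mathcal{S}_{n_k}$ with $s_{n_k}\to s$ and, by Krein--Milman (Bauer's maximum principle), extreme points $x_{n_k}\in\mathcal{X}_{n_k}$ with $\ell(x_{n_k})\ge\ell(s_{n_k})$, compactness of the set containing $\bigcup_n\mathcal{S}_n$ yields a cluster point $x\in\limsup\mathcal{X}_n=\mathcal{X}$ with $\ell(x)\ge\ell(s)>c$, a contradiction. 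That argument needs no measures and no metrizability and works in any locally convex Hausdorff space. Your route---Choquet representation of each $s_n$ by a measure carried by $\overline{\mathcal{X}_n}$, Prokhorov compactness, the portmanteau/inner-regularity step confining the limit measure to $\mathcal{X}$, and weak continuity of the barycenter---is legitimate here precisely because $L^1_I$ is separable, so $K$ and all the $\mathcal{S}_n$ are compact metric; each of the obstacles you flag does go through, and the support step you identify as the crux is argued correctly. What your approach buys is strictly more information (an explicit representing measure on $\mathcal{X}$ for every $s\in\mathcal{S}$); what it costs is generality and length. One point worth making explicit rather than leaving as a ``technicality'': both your argument and Jerison's place $\mathcal{S}$ in the \emph{closed} convex hull $\overline{Co}(\mathcal{X})$ (compact by Mazur), and that is how the theorem's ``$Co(\mathcal{X})$'' must be read, since the algebraic convex hull of a compact set need not be closed in $L^1_I$; with that reading your concluding use of Milman's partial converse for the final sentence is exactly right.
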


Our boundary maps are assumed to be piecewise expanding, it follows from Pelikan's condition \cite{P84} that the
  random map $R=\left\{\tau_{1},\tau_{2};\ p,1-p\right\}$ always has an invariant density function in $L^1_I$ for any measurable
  probability function $p$. We introduce two sequences of semi-Markov maps, $\left\{\tau^N_{1}\right\}$ and $\left\{\tau^N_{2}\right\}$, which approximate $\tau_{1}$ and $\tau_{2}$ respectively on uniform
partitions of $I$ into $N$ equal subintervals $\left\{I^u_{i}\right\}^N_{i=1}$. Moreover, we approximate the probability function $p$ by a sequence of step functions on these subintervals, denoted by $\left\{p^N\right\}$, i.e. $p^N=\sum^N_{i=1}p^N_i\chi_{_{I^u_i}}$. We can do this in such way that $p^N\rightarrow p$ pointwise almost everywhere.

We define sets of attainable densities and consider them as subsets of $L^1$:
\begin{align*}
\begin{split}
\mathcal{A}&=\left\{f:\text{ there exits probability }p \text{ such that }f\text{ is an invariant density of }R=\left\{\tau_{1},\tau_{2};\ p,1-p\right\}\right\},\\
\mathcal{A}_N&=\Big\{f_N:\text{ there exits probability }p^N \text{ which is piecewise constant on the uniform partition }\\
&\quad\quad\left\{I^u_{i}\right\}^N_{i=1} \text{  such that }f_N\text{ is an invariant density of }R^N=\left\{\tau^N_{1},\tau^N_{2};\ p^N,1-p^N\right\}\Big\},\\
\mathcal{E}_N&=\left\{f:f \text{ is an extreme point in }\mathcal{A}_N\right\},\\
\mathcal{E}&=\limsup\mathcal{E}_N, 
\end{split}
\end{align*}
where $N\in\mathds{N}$. Note that each $\mathcal{A}_N$ is defined for fixed $\tau^N_{1}$ and $\tau^N_{2}$, and $p^N$ can be viewed as a vector which varies.

\begin{remark}\label{rem_convex}
(I) $\mathcal{A}_N$ is convex \cite{GB06}.  Since $\tau^N_{1}$ and $\tau^N_{2}$ have large
enough slope, $\mathcal{A}_N$ is set of functions of bounded variations, and therefore is precompact. Furthermore, it follows from Lemma 1 and Theorem 6 in \cite{GB03} that, $\mathcal{A}_N$ is closed. Therefore $\mathcal{A}_N$ is compact.

(II) Each $\mathcal{A}_N$ is contained in a set of functions of bounded variations and it contains all density functions which are  the approximations of the invariant density functions of the random map $R=\left\{\tau_{1},\tau_{2};\ p,1-p\right\}$ for some $p$. Thus $\left\{\mathcal{A}_N\right\}$ is contained in some closed set of functions of bounded variations, which is compact.

(III) Now, consider a density function $f$ which is invariant for the random map $R=\left\{\tau_{1},\tau_{2};\ p,1-p\right\}$ for some $p$. It follows from Theorem \ref{conti_th} that we can choose a sequence of maps $R^N=\left\{\tau^N_{1},\tau^N_{2};\ p^N,1-p^N\right\}$, such that
\[f^N\in\mathcal{A}_N,\ \tau^N_{1}\rightarrow \tau_1,\  \tau^N_{2}\rightarrow \tau_2,\ p^N\rightarrow p,\]
where $f^N$ is the invariant density of $R^N$, and $f^N$ converges to the invariant density, $f$, of $R$. The convergence implies every neighbourhood of $f$ meets the sets of the sequence $\left\{\mathcal{A}_N\right\}$ with arbitrarily large index $N$. Thus, \[\limsup\mathcal{A}_N=\mathcal{A}.\]

(IV) $\mathcal{A}$ is convex \cite{GB03,GB06}.
\end{remark}

It follows from Theorem \ref{J_Th2} and (I-IV) in the Remark \ref{rem_convex} that:
\begin{theorem}
$Co(\mathcal{E})=\mathcal{A}$, where $Co(\mathcal{E})$ is the convex hull of $\mathcal{E}$.
\end{theorem}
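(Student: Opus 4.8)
The plan is to invoke Theorem \ref{J_Th2} directly, using the four facts collected in Remark \ref{rem_convex} as the hypotheses. Theorem \ref{J_Th2} requires a sequence $\{\mathcal{S}_n\}$ of compact convex sets whose union lies in a compact convex subset of $L^1_I$, and it concludes that $\limsup \mathcal{S}_n \subset Co(\limsup \mathcal{X}_n)$, with equality to $Co(\limsup \mathcal{X}_n)$ whenever $\limsup \mathcal{S}_n$ is itself convex. So the entire proof is a matter of matching $\mathcal{S}_n := \mathcal{A}_N$, $\mathcal{X}_n := \mathcal{E}_N$, $\mathcal{S} := \mathcal{A}$, and $\mathcal{X} := \mathcal{E}$, and then checking each hypothesis against the remark.

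First I would verify the compactness and convexity of each $\mathcal{A}_N$: this is exactly Remark \ref{rem_convex}(I), where convexity comes from \cite{GB06}, boundedness in $BV(I)$ (hence $L^1$-precompactness) comes from the large-slope assumption via a Lasota-Yorke estimate, and closedness from Lemma 1 and Theorem 6 of \cite{GB03}. Next I would confirm that $\bigcup_N \mathcal{A}_N$ is contained in a single compact convex subset of $L^1_I$, which is the content of Remark \ref{rem_convex}(II): all the $\mathcal{A}_N$ live inside one uniformly bounded $BV$ ball, whose $L^1$-closure is compact and convex. With these two verifications, the hypotheses of the first (inclusion) part of Theorem \ref{J_Th2} are met, yielding $\mathcal{A} = \limsup \mathcal{A}_N \subset Co(\mathcal{E})$.

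To upgrade the inclusion to equality I would supply the two remaining ingredients. The identification $\limsup \mathcal{A}_N = \mathcal{A}$ is Remark \ref{rem_convex}(III), which rests on the continuity Theorem \ref{conti_th}: given any $f$ invariant for $R = \{\tau_1,\tau_2;p,1-p\}$, one approximates $\tau_k$ by semi-Markov maps $\tau_k^N$ and $p$ by step functions $p^N \to p$, and Theorem \ref{conti_th} guarantees the invariant densities $f^N \in \mathcal{A}_N$ converge to $f$, so every neighbourhood of $f$ meets infinitely many $\mathcal{A}_N$. The convexity of $\mathcal{A} = \limsup \mathcal{A}_N$ is Remark \ref{rem_convex}(IV), from \cite{GB03,GB06}. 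Since $\mathcal{S} = \mathcal{A}$ is convex, the second part of Theorem \ref{J_Th2} applies and gives $\mathcal{A} = Co(\mathcal{E})$, which is the claim.

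The main obstacle is conceptual rather than computational: every hard estimate has already been relegated to the cited results and to Remark \ref{rem_convex}, so the delicate point is simply ensuring that the limit-superior identification $\limsup \mathcal{A}_N = \mathcal{A}$ is genuinely two-sided. The inclusion $\mathcal{A} \subset \limsup \mathcal{A}_N$ follows from the approximation scheme of (III), but one should also be satisfied that no spurious limit points arise, i.e. that $\limsup \mathcal{A}_N \subset \mathcal{A}$, so that applying Theorem \ref{J_Th2} with $\mathcal{S} = \mathcal{A}$ is legitimate; this reverse inclusion is where the stability afforded by Theorem \ref{conti_th} and the uniform Lasota-Yorke bound must be invoked to ensure any Kuratowski limit point of densities invariant for the approximating maps is itself invariant for some $R = \{\tau_1,\tau_2;p,1-p\}$. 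Once that is secured, the proof is a direct citation of Theorem \ref{J_Th2}.
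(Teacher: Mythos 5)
Your proposal matches the paper's proof exactly: the paper derives the theorem as an immediate consequence of Theorem \ref{J_Th2} applied with $\mathcal{S}_n=\mathcal{A}_N$, $\mathcal{X}_n=\mathcal{E}_N$, using items (I)--(IV) of Remark \ref{rem_convex} to supply the hypotheses, which is precisely your matching. If anything you are more careful than the paper, which asserts $\limsup\mathcal{A}_N=\mathcal{A}$ in Remark (III) while only arguing the inclusion $\mathcal{A}\subset\limsup\mathcal{A}_N$; your observation that the reverse inclusion also needs the stability from Theorem \ref{conti_th} and the uniform Lasota--Yorke bound is a legitimate point the paper leaves implicit.
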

This means $\mathcal{E}$ contains all the extreme points of $\mathcal{A}$.
\begin{corollary}\label{coro_extre}
Every invariant density which is an extreme point in $\mathcal{A}$ is a limit of densities $f^N$ which are invariant
for some random map $R^N$ with probability $p^N(x)\in\{0,1\}$.
\end{corollary}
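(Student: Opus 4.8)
The plan is to obtain Corollary \ref{coro_extre} directly from the Theorem $Co(\mathcal{E})=\mathcal{A}$ just proved, together with the semi-Markov bang-bang characterization of \cite{GB06}; the only genuine work is converting membership in the Kuratowski $\limsup$ into an honest $L^1$-convergent sequence. First I would fix a density $f$ that is an extreme point of $\mathcal{A}$. By the preceding Theorem, $\mathcal{E}$ contains all extreme points of $\mathcal{A}$, so $f\in\mathcal{E}=\limsup\mathcal{E}_N$. This already places $f$ as a Kuratowski limit of extreme points of the finite-level sets $\mathcal{A}_N$, which is the entire content to be unpacked.

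Next I would extract an actual sequence. By the Definition of $\limsup$, every neighbourhood of $f$ meets infinitely many $\mathcal{E}_N$. Applying this to the balls $B(f,1/k)$ in $L^1_I$, for each $k$ there are infinitely many indices $N$ with $\mathcal{E}_N\cap B(f,1/k)\neq\emptyset$, so I may choose a strictly increasing sequence $N_k$ together with densities $f^{N_k}\in\mathcal{E}_{N_k}$ satisfying $\|f^{N_k}-f\|_{L^1}<1/k$; then $f^{N_k}\to f$. I would then identify what the elements of $\mathcal{E}_{N_k}$ are: $\mathcal{E}_N$ is by definition the set of extreme points of $\mathcal{A}_N$, and $\mathcal{A}_N$ is the set of invariant densities of random maps built on the semi-Markov piecewise linear boundary maps $\tau_1^N,\tau_2^N$. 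Hence the result of \cite{GB06} applies at each level and shows that every extreme point of $\mathcal{A}_N$ is the invariant density of a bang-bang random map, i.e.\ one with $p^N(x)\in\{0,1\}$. Therefore each $f^{N_k}$ is invariant for a random map $R^{N_k}=\{\tau_1^{N_k},\tau_2^{N_k};p^{N_k},1-p^{N_k}\}$ with $p^{N_k}(x)\in\{0,1\}$, and $f^{N_k}\to f$, which is exactly the claim.

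The hard part will be almost entirely bookkeeping rather than analysis, since this corollary is close to a restatement of the Theorem. The two points requiring care are: the diagonal extraction of a genuine convergent sequence from the Kuratowski $\limsup$, where one must be content with a subsequence of indices $N_k$ rather than all $N$ (this is harmless, as the statement only asks for \emph{a} sequence $f^N\to f$); and the verification that the finite-level bang-bang theorem of \cite{GB06} applies verbatim to each $\mathcal{A}_N$, which holds because every approximating map $\tau_k^N$ is semi-Markov piecewise linear by construction. With these two observations in place, the conclusion follows by chaining them.
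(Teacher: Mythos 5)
Your proposal is correct and matches the paper's intended argument: the paper states this corollary without a separate proof, presenting it as an immediate consequence of the theorem $Co(\mathcal{E})=\mathcal{A}$ (via Milman-type containment of extreme points in $\mathcal{E}=\limsup\mathcal{E}_N$) combined with the bang-bang characterization of the extreme points of each $\mathcal{A}_N$ from \cite{GB06}. Your unpacking of the Kuratowski $\limsup$ into an actual $L^1$-convergent subsequence $f^{N_k}$ is exactly the bookkeeping the paper leaves implicit, and your caveat that one obtains a subsequence of indices is accurate and harmless.
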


Corollary \ref{coro_extre} suggests the following algorithm to optimize a continuous functional $F$ on the
set of densities $\mathcal{D}_{RM}$. For large enough $N$, construct invariant densities $f^N_k$ for all
``bang-bang" probabilities $p^N_k$. Find $f^N_k$ which optimizes $F$ on  $\mathcal{D}^N_{RM}$. Then, $\left\{F(f^N_k)\right\}$ converges to the optimal value of $F$ on $\mathcal{D}_{RM}$.

\section{Summary of examples}
(i) Example 1 presents a positive general solution for the case when the selector is the triangle map and the boundary maps have two branches, the first quadratic, the second linear.

(ii) Example 2 presents a positive solution for the specific case of a selector and  semi-Markov  boundary maps
with five branches.

(iii) Example 3 shows that with the same boundary maps as in Example 2 and another selector (which has an invariant pdf), it is impossible.

(iv) Example 4  demonstrates a  successful application of Propositions 1-4 to find the probability function defining
a random map with the required pdf.

(v) Example 5 shows that the method of Propositions 1-4 sometimes fails. The function produced is not a probability function.

(vi) In Section 5 we produce boundary maps that have no acim, but a random map based on them preserves Lebesgue measure invariant for a selector.

\section{Concluding remarks}
The main objective of this paper is to study conditions for a selector of a multivalued function with graph G to have statistical dynamics that can be represented by the dynamics of a position dependent random map based solely on the boundaries of G. Theorem 1 and Corollary 1 state that any convex combination of pdf's of boundary maps can be realized both as the pdf of a selector and as the pdf of a random map. We develop results that work in general but demonstrate with examples that some of our methods depend very sensitively on the selector and on the boundary maps.

We also study the converse problem: when can a pdf of a random map based on the boundary maps be realized as a pdf of some selector? Theorem \ref{Th_rand_sel} proves that this always holds in the case of piecewise expanding boundary maps which are piecewise monotonic having the same monotonicity on the partition intervals. Finally we study the extreme points of the set of pdf's of all random maps based on the boundary maps and attempt to characterize them.

In the future we plan to generalize Theorem \ref{Th_rand_sel} omitting, if possible, the assumption
of common monotonicity on partition intervals. A major project will be the generalization of our results to higher dimensions.

\begin{acknowledgement}
We are extremely grateful for the very helpful comments of the anonymous reviewers. Their suggestions and critiques have greatly improved the paper.
\end{acknowledgement}

\end{document}